\def\bbR{\mathbb{R}}
\def\cI{{\mathcal I}}
\def\cN{{\mathcal N}}
\def\cS{{\mathcal S}}
\def\Qfn{{\tilde{Q}}}
\numberwithin{equation}{section}
\theoremstyle{plain}
\newtheorem{theorem}{Theorem}
\numberwithin{theorem}{section}
\newtheorem{proposition}[theorem]{Proposition}
\newtheorem{lemma}[theorem]{Lemma}
\newtheorem{definition}[theorem]{Definition}
\newtheorem{conditions}[theorem]{Conditions}
\theoremstyle{remark}
\newcommand{\minimize}{\textnormal{minimize}}
\newcommand{\maximize}{\textnormal{maximize}}
\newcommand{\psf}{\psi}
\newcommand{\signal}{x}
\newcommand{\numc}{M}
\newcommand{\coeff}{c}
\newcommand{\muopt}{\mu_\star}
\newcommand{\dualfn}{Q}
\newcommand{\Lpspace}[1]{L^2_{#1}}
\newcommand{\Lprod}[2]{{\left \langle #1 \right \rangle}_{#2}}
\DeclareMathOperator{\supp}{supp}
\newcommand{\deltafn}[1]{\delta_{#1}}
\newcommand{\defn}{:=}
\newcommand{\dist}{P}
\newcommand{\Kp}{{K_\dist}}
\newcommand{\dy}{\partial_2}
\newcommand{\dx}{\partial_1}
\newcommand{\aie}{\alpha_\epsilon^{[i]}}
\newcommand{\bie}{\beta_\epsilon^{[i]}}
\newcommand{\obs}{P} 
\newcommand{\Amat}{\Lambda}
\newcommand{\Kvec}{\kappa}
\newcommand\itemrow[2]{%
  \item[{\makebox[10em][l]{#1}}]{#2}%
}
\newcommand{\CondMild}{\textsc{Integrability}}
\newcommand{\CondW}{\textsc{Positivity}}
\newcommand{\CondPSD}{\textsc{Independence}}
\newcommand{\CondOnesMatrix}{\textsc{Determinantal}}
\newcommand{\nodal}{\tau}
\newcommand{\nnodal}{z}
\newcommand{\numNod}{N_1}
\newcommand{\numNnod}{N_0}
\newcommand{\extra}{\tau}
\newcommand{\TSet}{\mathcal{T}}
\newcommand{\ddd}{\gamma}
\date{\today}
\author{Geoffrey Schiebinger$^*$, Elina Robeva$^\sharp$ and Benjamin Recht$^*\dagger$\\
 $^*$Deparment of Statistics
 $^\sharp$Department of Mathematics\\
 $^\dagger$Department of Electrical Engineering and Computer Science\\
 University of California, Berkeley}
\begin{document}
\title{Superresolution without Separation}
\maketitle

\begin{abstract}
This paper provides a theoretical analysis of diffraction-limited superresolution, demonstrating that arbitrarily close point sources can be resolved in ideal situations.  
Precisely, we assume that the incoming signal is a linear combination of $M$ shifted copies of a known waveform with unknown shifts and amplitudes, and one only observes a finite collection of evaluations of this signal.  We characterize  properties of the base waveform such that the exact translations and amplitudes can be recovered from $2M+1$ observations.  This recovery is achieved by solving a 
a weighted version of basis pursuit over a continuous dictionary.    Our methods combine classical polynomial interpolation techniques  with contemporary tools from compressed sensing. 
\end{abstract}

\section{Introduction}

Imaging below the diffraction limit remains one of the most practically important yet theoretically challenging problems in signal processing.  Recent advances in superresolution imaging techniques have made substantial progress towards overcoming these limits in practice~\cite{natureMethods,nobelPrize}, but theoretical analysis of these powerful methods remains elusive.  Building on polynomial interpolation techniques and tools from compressed sensing, this paper provides a theoretical analysis of diffraction-limited superresolution, demonstrating that arbitrarily close point sources can be resolved in ideal situations.  

We assume that the measured signal takes the form
\begin{equation}
\label{EqnSignal}
x(s) = \sum_{i=1}^M c_i \psi(s,t_i),
\end{equation}
Here $\psf(s,t)$ is a function that describes the image at spatial location $s$ of a point source of light localized at $t$. The function $\psf$ is called the {\em point spread function}, and we assume its particular form is known beforehand. In~\eqref{EqnSignal},  $t_1, \ldots, t_M$ are the locations of the point sources and $c_1,...,c_M > 0$ are their intensities. Throughout we assume that these quantities together with the number of point sources $M$, are fixed but unknown.  The primary goal of superresolution is to recover the locations and intensities from a set of noiseless observations
$$\{x(s) \; | \; s\in \cS\}\,.$$
A mock-up of such a signal $x$ is displayed in Figure~\ref{fig:mockup}.

\begin{figure}
\centering
\includegraphics[width= \textwidth]{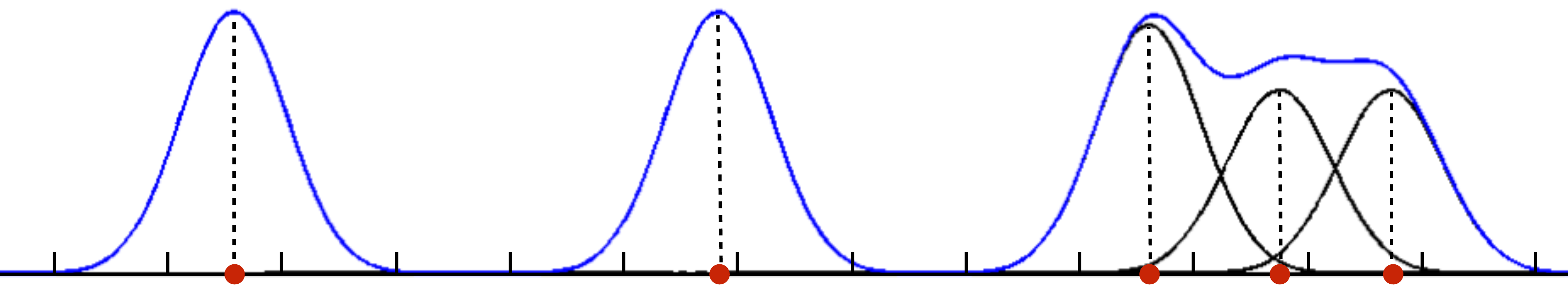}
\caption{An illustrative example of~\eqref{EqnSignal} with the Gaussian point spread function $\psf(s,t)= e^{-(s-t)^2}$.  
The $t_i$ are denoted by red dots, and the true intensities $c_i$ are illustrated by vertical, dashed black lines.  
The super position resulting in the signal $x$ is plotted in blue.  The samples $\cS$ would be observed at the tick marks on the horizontal axis.}
\label{fig:mockup}
\end{figure}

In this paper, building on the work of Cand\`es and Fernadez-Granda~\cite{cg_noisy,CandesGranda,granda2} and Tang~\emph{et al}~\cite{Bhaskar13,Tang14,Tang12}, we aim to show that we can recover the tuple $(t_i, c_i, M)$ by solving a convex optimization problem.  We formulate the superresolution imaging problem as an infinite dimensional optimization over measures.  Precisely, note that
the observed signal can be rewritten as
\begin{equation}
\label{EqnMeasure}
x(s)  = \sum_{i=1}^M c_i \psi(s,t_i) = \int \psf(s,t) d\muopt(t)\,.
\end{equation}
Here, $\muopt$ is the positive discrete measure $\sum_{i=1}^M c_i\delta_{t_i}$, where $\delta_t$ denotes the Dirac measure centered at $t$.  We aim to show that we can recover $\muopt$ by solving the following:
\begin{equation}
\label{Primal}
\begin{aligned}
\underset{\mu}{\minimize} & \quad \int w(t) \mu(dt) \\ 
\textnormal{subject to} & \quad \signal(s) = \int \psf(s,t)d\mu(t), \quad s \in \cS \\
& \quad \supp \mu \subset B\\
& \quad \mu \geq 0\,.
\end{aligned}
\end{equation}
Here, $B$ is a fixed compact set and $w(t)$ is a weighting function that weights the measure at different locations. The optimization problem~\eqref{Primal} is over the set of all positive finite measures~$\mu$ supported on $B$.

The optimization problem~\eqref{EqnMeasure} is an analog of $\ell_1$ minimization over the continuous domain $B$.  Indeed, if we know a priori that the $t_i$ are elements of a finite discrete set $\Omega$, then optimizing over all measures subject to $\supp \mu \subset \Omega$ is precisely equivalent to weighted $\ell_1$ minimization.  This infinite dimensional analog with uniform weights has proven useful for compressed sensing over continuous domains~\cite{Tang12}, resolving diffraction-limited images from low-pass signals~\cite{cg_noisy,granda2,Tang14}, system identification~\cite{SysID}, and many other applications~\cite{CRPW10}.  We will see below that the weighting function essentially ensures that all of the candidate locations are given equal influence in the optimization problem.

Our main result, Theorem~\ref{ThmMain}, establishes that for one-dimensional signals, under rather mild conditions, we can recover $\muopt$ from the optimal solution of~\eqref{Primal}.  Our conditions, described in full-detail below essentially require the observation of at least $2M+1$ samples, and that the set of translates of the point spread function forms a linearly independent set.  
In Theorem~\ref{ThmGauss} we verify that these conditions are satisfied by the Gaussian point spread function for any $M$ source locations with no minimum separation condition.

Boyd, Schiebinger and Recht~\cite{NickGeoffBen} show that the problem~\eqref{Primal} can be optimized to precision $\epsilon$ in polynomial time using a greedy algorithm.  In our experiments in Section~\ref{SecSym}, we use this algorithm to demonstrate that our theory applies, and show that even in multiple dimensions with noise, we can recover very closely spaced point sources. 

\subsection{Main Result}

We restrict our theoretical attention to the one-dimensional case, leaving the higher-dimensional cases to future work.   Let $\psi:\mathbb{R}^2 \rightarrow \mathbb{R}$ be our one dimensional point spread function, with the first argument denoting the position where we are observing the image of a point source located at the second argument.  

For convenience, we will assume that $B = [-T,T]$ for some large scalar $T$. However, our proof will trivially extend to more restricted subsets of the real line. To define the weighting function in the objective of our optimization problem, fix a positive measure $P$ on $\cS$ and set
\begin{equation*}
w(t) = \int  \psf(s,t)d\obs(s)\,.
\end{equation*}
Here, for concreteness one can think of $P$ as the uniform measure on the set $\cS$ consisting of points at which we observe the signal.  Just note that the particular form of $P$ only has limited impact on the conditions of our main theorem.

Before we proceed to state the main result, we need to introduce a bit of notation.
We define the vector valued function $v : \bbR \to \bbR^{2M}$ via 
\begin{equation}\label{eq:v-definition}
v(s) = \begin{bmatrix}   \psf(s,t_1)  & \ldots & \psf(s,t_M)& \frac{d}{dt_1} \psf(s,t_1) &\ldots& \frac{d}{dt_M}\psf(s,t_M) \end{bmatrix}^T\,.
\end{equation}
Let us also define the vector valued function $\kappa: \bbR \to \bbR^{2M}$ via
\begin{equation}\label{eq:kappa-definition}
\kappa(t) = \frac 1 {w(t)} \int \psf(t,s) v(s) dP(s)\,.
\end{equation}
and the matrix-valued function $\Amat : \bbR^{2M+1} \to \bbR^{{2M+1}\times{2M+1}}$ via
\begin{equation}\label{eq:lambda-definition}
\Amat(p_1,\ldots,p_{2M+1}) \defn \begin{bmatrix} \kappa(p_1) & \ldots & \kappa(p_{2M+1}) \\ 1 & \ldots & 1 \end{bmatrix}.
\end{equation}

\begin{conditions}\label{main-conditions} 
  We impose the following four conditions on the point spread function $\psf$:
\begin{enumerate}
\itemrow{\CondMild}{ $\psf(\cdot,t) \in \Lpspace{\obs}$.}
\itemrow{\CondW}{ For all $t\in [-T,T]$ we have $w(t) > 0$.}
\itemrow{\CondPSD}{ The matrix $\int v(s) v(s)^T dP(s)$ is nonsingular.}
\itemrow{\CondOnesMatrix}{ There exists $\rho > 0$ such that for any
$t_i^-,t_i^+ \in (t_i-\rho,t_i+\rho)$, and  $t\in [-T,T]$, \\  the matrix $\Amat\big(t_1^-, t_1^+,\ldots,t_M^-,t_M^+,t\big)$
is nonsingular whenever $t,t_i^-,t_i^+$ are distinct.}
\end{enumerate} 
\end{conditions}

\begin{theorem}
\label{ThmMain}
If $\psf$ satisfies the Conditions~\ref{main-conditions}~and $|\cS| > 2M$, then the true measure $\muopt$ is the unique optimal solution of~\eqref{Primal}.
\end{theorem}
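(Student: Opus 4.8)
The plan is to prove both optimality and uniqueness at once by exhibiting a \emph{strict dual certificate}: a function of the form $Q(t) = \int \psf(s,t)\,\eta(s)\,dP(s)$, which lies in the span of the measurement functionals $\{\psf(s,\cdot):s\in\cS\}$ (these integrals are finite by Condition \CondMild), satisfying $Q(t)\le w(t)$ on $B$, $Q(t_i)=w(t_i)$ for each $i$, and the \emph{strict} inequality $Q(t)<w(t)$ for every $t\in B\setminus\{t_1,\ldots,t_M\}$. Granting such a $Q$, optimality of $\muopt$ follows from weak duality: for any feasible $\mu$, the constraint gives $\int w\,d\mu \ge \int Q\,d\mu = \int \eta(s)\big(\int \psf(s,t)\,d\mu(t)\big)dP(s)=\int \eta(s)\,x(s)\,dP(s)$, a constant independent of $\mu$ whose value is attained at $\muopt$. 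For uniqueness, any other optimizer $\hat\mu$ satisfies $\int(w-Q)\,d\hat\mu=0$ with $w-Q\ge0$ and $\hat\mu\ge0$, so strictness forces $\supp\hat\mu\subseteq\{t_1,\ldots,t_M\}$; writing $\hat\mu=\sum_i\hat c_i\deltafn{t_i}$ and matching the constraints, Condition \CondPSD (which renders the $\psf(\cdot,t_i)$ linearly independent in $\Lpspace{P}$) forces $\hat c_i=c_i$, whence $\hat\mu=\muopt$.

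The construction of $Q$ is the heart of the argument, and I would carry it out in the normalized coordinate $\bar Q:=Q/w$, which is legitimate since Condition \CondW gives $w>0$ on $B$. Using the identity $w(t)\kappa(t)=\int\psf(s,t)v(s)\,dP(s)$, the choice $\eta(s)=\langle\alpha,v(s)\rangle+\beta$ yields $\bar Q(t)=\langle\alpha,\kappa(t)\rangle+\beta$, so $\bar Q$ ranges over the $(2M{+}1)$-dimensional space $\mathrm{span}\{\kappa_1,\ldots,\kappa_{2M},1\}$. The certificate conditions translate into: $\bar Q\le1$ on $B$, with $\bar Q(t_i)=1$ and—since each $t_i$ is then an interior maximum of the smooth nonpositive function $\bar Q-1$—also $\bar Q'(t_i)=0$. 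Equivalently, I seek a nonnegative element $g:=1-\bar Q$ of $\mathrm{span}\{\kappa_1,\ldots,\kappa_{2M},1\}$ whose only zeros on $B$ are double zeros at $t_1,\ldots,t_M$; then $Q=w(1-g)$ is the desired certificate.

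I would build $g$ explicitly as the confluent determinant obtained from $\Amat$ by replacing, for each $i$, the pair of columns collapsing onto $t_i$ with the value/derivative columns $\kappa(t_i),\kappa'(t_i)$, while keeping $\kappa(t)$ in the final column; expanding along that column exhibits $g$ as a generalized polynomial in $\{\kappa_1,\ldots,\kappa_{2M},1\}$. By construction $g(t_i)=0$ and $g'(t_i)=0$ (substituting $t=t_i$, or differentiating, reproduces an existing column), so each $t_i$ is a double zero. The decisive point is that $g$ has no further zeros and maintains one sign: for $t\notin\{t_i\}$ the confluent determinant is a limit of the genuine $(2M{+}1)$-point determinants $\det\Amat(t_1^-,t_1^+,\ldots,t_M^-,t_M^+,t)$ with $t_i^\pm\to t_i$, which Condition \CondOnesMatrix guarantees to be nonzero, and connectedness of the admissible configurations pins the sign, so after normalization $g\ge0$. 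I expect the main obstacle to be exactly this step: upgrading the \emph{distinct-point} nonsingularity supplied by Condition \CondOnesMatrix to the \emph{confluent} (double-zero) regime and controlling the global sign of $g$, since the nonvanishing of a limit of nonzero determinants is not automatic and must be secured through the Chebyshev-system structure via a Rolle/divided-difference argument relating $\det\Amat$ with clustered points to the Hermite determinant. With $g$ established, the weak-duality and independence steps above close the proof.
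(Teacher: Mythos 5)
Your overall architecture matches the paper's: a strict dual certificate $Q(t)=\Lprod{q,\psf_t}{\obs}$ with $Q(t_i)=w(t_i)$ and $Q<w$ elsewhere yields optimality by (weak) duality and, combined with \CondPSD, uniqueness; and the certificate is sought as a generalized polynomial in $\{\kappa_1,\ldots,\kappa_{2M},1\}$ with double zeros of $w-Q$ at the $t_i$. Your reduction step and uniqueness step are essentially identical to the paper's Proposition~\ref{PropRecover}.

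The gap is in the construction of $g=1-\bar Q$, and it is exactly the step you flag as ``the main obstacle'': you assert that the confluent determinant is nonzero for $t\notin\{t_1,\ldots,t_M\}$ because it is a limit of the determinants $\det\Amat(t_1^-,t_1^+,\ldots,t_M^-,t_M^+,t)$, which \CondOnesMatrix\ makes nonzero. But a limit of nonzero quantities can vanish, and the confluent determinant is obtained only after dividing each collapsing column pair by a factor of order $\epsilon$, so its nonvanishing genuinely does not follow from the distinct-point condition; without it your certificate need not be strict and the whole argument collapses. The paper circumvents this by never evaluating a confluent determinant: it solves the $\epsilon$-perturbed interpolation problem $\Qfn_\epsilon(t_i\pm\epsilon)=w(t_i\pm\epsilon)$ (whose coefficient matrix converges to $\int v(s)v(s)^T\,dP(s)$, invertible by \CondPSD, so the coefficients converge), shows via \CondOnesMatrix\ that $\Qfn_\epsilon-w$ has \emph{only} the $2M$ contact points $t_i\pm\epsilon$ and that all are nodal, and then passes to the limit $\Qfn$. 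The key device for excluding extra contact points of the \emph{limit} is a doubling-of-zeros perturbation: if $w-\Qfn$ had an additional non-nodal zero at some $\tau_1$, then for small $\ddd>0$ the polynomial $\Qfn+\ddd w$ meets $w$ at $2M+2$ genuinely \emph{distinct} points $p_1<\cdots<p_{2M+2}$, and Lemma~\ref{lem:nonsingularMatrix} plus \CondOnesMatrix\ applied to $\Amat(p_1,\ldots,p_{2M+1})$ gives a contradiction. This converts the confluent question back into a distinct-point question, which is what the hypothesis actually controls. To repair your proof you would either need to carry out this perturbation argument (or an equivalent Rolle/divided-difference bridge to the Hermite determinant), or strengthen \CondOnesMatrix\ to a confluent nonsingularity hypothesis --- which would change the theorem's assumptions.
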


Note that the first three parts of Conditions~\ref{main-conditions} are both rather mild and rather easy to verify.  \CondMild~simply requires the function to be integrable, and this is trivially true for a continuous point spread function and the uniform measure on a discrete set. \CondW~eliminates the possibility that a candidate point spread function could equal zero at all locations---obviously we would not be able to recover the source in such a setting! \CondPSD~is satisfied if for any $s_1,\ldots,s_{2M} \in \cS$, the matrix $\big[v(s_1) \ldots v(s_{2M})\big]$ is invertible.  Such a condition is necessary if we want to recover the amplitudes uniquely assuming we knew the true $t_i$ locations~\emph{a priori}.

The fourth condition, \CondOnesMatrix, looks impenetrable at first glance, and is indeed nontrivial to verify.  As we will see below, this key condition is related to the existence of a canonical ``dual-certificate'' that is used ubiquitously in sparse approximation.  In compressed sensing, this construction is due to Fuchs~\cite{Fuchs}, but its origins lie in the theory of polynomial interpolation developed by Markov and Tchebycheff, and extended by Gantmacher, Krein, Karlin and others (see the survey in Section~\ref{sec:T-systems}).  Importantly, we show it is satisfied by the Gaussian point spread function with no separation condition.  

\begin{theorem}
\label{ThmGauss}
If $|\cS|> 2M$, then the Conditions~\ref{main-conditions}~hold for $\psf(s,t) = e^{-(s-t)^2}$  for any $t_1 < \ldots < t_M$.
\end{theorem}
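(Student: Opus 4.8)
The plan is to dispatch the first three conditions by direct computation and then reduce the fourth to a statement about Chebyshev systems together with the total positivity of the Gaussian kernel.

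For \CondMild, note that $\psf(\cdot,t)=e^{-(\cdot-t)^2}$ is bounded by $1$ and that $P$ is a finite measure on the finite set $\cS$, so $\int \psf(s,t)^2\,dP(s)\le P(\cS)<\infty$. For \CondW, the integrand $e^{-(s-t)^2}$ is strictly positive and $P\ge 0$ is nonzero, so $w(t)=\int e^{-(s-t)^2}\,dP(s)>0$ for every $t$. For \CondPSD, it suffices to show that $\eta^{T}v(s)=0$ for all $s\in\cS$ forces $\eta=0$. Writing $e^{-(s-t_i)^2}=e^{-s^2}e^{-t_i^2}e^{2t_i s}$, one has $\eta^{T}v(s)=e^{-s^2}\sum_{i=1}^M(\alpha_i+\beta_i s)e^{2t_i s}$, where $(\alpha_i,\beta_i)$ is obtained from $\eta$ by an invertible transformation. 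A nonzero real exponential polynomial $\sum_{i=1}^M(\alpha_i+\beta_i s)e^{2t_i s}$ with distinct exponents has at most $2M-1$ real zeros; since $|\cS|>2M$, vanishing on $\cS$ forces all $\alpha_i=\beta_i=0$, hence $\eta=0$.

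The substance of the theorem is \CondOnesMatrix. First I would use \CondW to clear denominators: multiplying the $j$-th column of $\Amat$ by $w(p_j)>0$ and reordering rows shows that $\Amat(p_1,\dots,p_{2M+1})$ is nonsingular if and only if $\det[\phi_k(p_j)]_{k,j=0}^{2M}\neq 0$, where $\phi_0=w$ and $\phi_k(t)=\int e^{-(t-s)^2}f_k(s)\,dP(s)$ with $f_0\equiv 1$, $f_i(s)=e^{-(s-t_i)^2}$, and $f_{M+i}(s)=2(s-t_i)e^{-(s-t_i)^2}$ for $1\le i\le M$. By the basic composition (continuous Cauchy--Binet) formula,
\begin{equation*}
\det[\phi_k(p_j)]=\int_{s_0<\cdots<s_{2M}}\det\big[e^{-(p_j-s_l)^2}\big]_{j,l}\;\det[f_k(s_l)]_{k,l}\;\prod_{l}dP(s_l).
\end{equation*}
Since $e^{-(s-t)^2}=e^{-s^2}e^{2st}e^{-t^2}$ and $e^{2st}$ is strictly totally positive, the first determinant is strictly positive whenever $p_0<\cdots<p_{2M}$ and $s_0<\cdots<s_{2M}$; as the $p_j$ are distinct we may order them, up to an overall sign. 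Because $|\cS|>2M$, the measure $P$ has at least $2M+1$ atoms, so at least one ordered tuple $(s_l)$ carries positive weight, and the whole sum is nonzero provided the second factor $\det[f_k(s_l)]$ has a constant nonzero sign over all increasing tuples, i.e.\ provided the $f_k$ form a Chebyshev system. Note that this argument needs no proximity of the nodes, so for the Gaussian \CondOnesMatrix in fact holds for every distinct configuration and any $\rho$ works.

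It remains to prove the Chebyshev-system claim: every nontrivial
\begin{equation*}
g(s)=c+\sum_{i=1}^M\big(a_i+b_i(s-t_i)\big)e^{-(s-t_i)^2}
\end{equation*}
has at most $2M$ zeros. I expect this to be the main obstacle, because the naive route — differentiate to remove $c$, multiply by $e^{s^2}$, and count zeros of the resulting exponential polynomial — overshoots to $3M$ and ignores the rigid coupling of the coefficients. The clean argument is again total positivity: write $g=\int e^{-(s-t)^2}\,d\nu(t)$ with $\nu=\tfrac{c}{\sqrt\pi}\,dt+\sum_{i=1}^M\big(a_i\delta_{t_i}-\tfrac{b_i}{2}\delta'_{t_i}\big)$, and invoke the variation-diminishing property of the Gaussian (a P\'olya frequency function), which gives a bound on the number of zeros of $g$ by the number of sign changes of $\nu$. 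The one-signed background $\tfrac{c}{\sqrt\pi}\,dt$ contributes no sign changes, while each node $t_i$ carries a first-order jet contributing at most two, for a total of at most $2M$. Making this sign-change count rigorous for a distribution with a continuous background and $\delta'$ terms is the delicate point; I would either mollify the atoms and pass to a limit, or reduce $\det[f_k(s_l)]$ (by factoring $e^{-s_l^2}$ from each column and a row reduction) to $\det[\tilde g_k(s_l)]$ with $\tilde g_0(s)=e^{s^2}$ and the remaining $\tilde g$'s equal to $e^{2t_i s},\,s e^{2t_i s}$, then verify that adjoining the dominating function $e^{s^2}$ to the standard exponential system $\{e^{2t_i s},s e^{2t_i s}\}$ preserves the Chebyshev property, leaning on the total-positivity facts surveyed in Section~\ref{sec:T-systems}.
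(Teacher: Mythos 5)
Your handling of \CondMild, \CondW, and \CondPSD{} is correct, and your null-space argument for \CondPSD{} (reducing to the fact that a nontrivial $\sum_i(\alpha_i+\beta_i s)e^{2t_i s}$ has at most $2M-1$ real zeros) is a clean substitute for the paper's Cauchy--Binet computation. For \CondOnesMatrix{} your reduction is also the paper's: clear the $w(p_j)$ denominators, apply the continuous Cauchy--Binet formula to factor the determinant into $\det[e^{-(p_j-s_l)^2}]$, which is one-signed by strict total positivity of the Gaussian, times $\det[f_k(s_l)]$; this is exactly the stronger statement \eqref{stronger}, and your remark that no clustering of the $p_j$ near the $t_i$ is needed matches it. Everything therefore hinges on the claim that $\{1\}\cup\{e^{-(s-t_i)^2},(s-t_i)e^{-(s-t_i)^2}\}_{i=1}^M$ is a T-system, equivalently (after multiplying through by $e^{s^2}$) that a nontrivial $c\,e^{s^2}+\sum_{i=1}^M(\tilde a_i+\tilde b_i s)e^{2t_i s}$ cannot have $2M+1$ distinct real zeros. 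That is Lemma~\ref{GaussianLebesgue} together with Lemma~\ref{LemZeros}, and it is the mathematical heart of the theorem.

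This key claim is precisely what you do not prove, and the variation-diminishing sketch has concrete holes. First, the determinant argument requires excluding $2M+1$ \emph{distinct} zeros, not $2M+1$ sign changes, while the basic variation-diminishing property of a P\'olya frequency function controls sign changes; you need the strong form that counts zeros, which for the Gaussian rests on the very extended-total-positivity facts you are deferring. Second, ``the number of sign changes'' of $\nu=\tfrac{c}{\sqrt{\pi}}\,dt+\sum_i(a_i\delta_{t_i}-\tfrac{b_i}{2}\delta'_{t_i})$ is undefined, and mollifying does not close the argument in the direction you need: uniform limits of functions with at most $2M$ distinct zeros can acquire additional non-nodal zeros (e.g.\ $x((x-1)^2+\epsilon)\to x(x-1)^2$), so a bound on the zeros of each $g_\epsilon$ does not bound the zeros of $g$. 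Third, your fallback---``verify that adjoining $e^{s^2}$ to the exponential system preserves the Chebyshev property''---restates the claim rather than proving it. Note that the route you dismiss as naive is, with one additional idea, how the paper actually closes this: in Lemma~\ref{LemZeros} one repeatedly multiplies by a suitable $e^{-\lambda s}$ and differentiates twice, annihilating one affine block per step at a cost of at most two zeros by Rolle; the count does not overshoot because each such step maps the $e^{s^2}$ term to $f_i(s)e^{s^2}$, where $f_i$ is shown by an explicit inductive identity to be a sum of squares of polynomials one of which is a positive constant, hence strictly positive. You would need to supply that computation, or a genuinely rigorous zero-counting version of the variation-diminishing argument, for the proof to be complete.
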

Before turning to the proofs of these theorems (c.f. Sections~\ref{Sec4} and~\ref{SecProofThmGauss}), we survey the mathematical theory of superresolution imaging.

\subsection{Foundations: Tchebycheff Systems}\label{sec:T-systems}
Our proofs rely on the machinery of Tchebycheff\footnote{Tchebycheff is one among many transliterations from the cyrillic.  Others include Chebyshev, Chebychev, and Cebysev.} systems.   
This line of work originated in the 1884 doctoral thesis of A. A. Markov on approximating the value of an integral $\int_a^b f(x) dx$ from the moments 
\mbox{$\int_a^b xf(x) dx$, \ldots, $\int_a^b x^n f(x)dx$.}
His work formed the basis of the proof by Tchebycheff (who was Markov's doctoral advisor) of the central limit theorem in 1887~\cite{Tchebycheff}.  

\begin{definition}
A set of functions $u_1,\ldots, u_n$ is called a {\em Tchebycheff system} (or {\em T-system}) if for any $t_1 < \ldots < t_n$, the matrix
$$\begin{pmatrix}
u_1(t_1) & \ldots & u_1(t_n) \\
\vdots \\
u_n(t_1) & \ldots & u_n(t_n)
\end{pmatrix}$$ 
is invertible.
\end{definition}

An equivalent definition is that the functions $u_1,...,u_n$ form a T-system if and only if every linear combination $U(t) = a_1u_1(t) + \cdots + a_n u_n(t)$ has at most $n-1$ zeros. 
One natural example of a T-system is given by the functions $1, t, \ldots, t^{n-1}$.  Indeed, a polynomial of order $n-1$ can have at most $n-1$ zeros.  
Equivalently, the Vandermonde determinant 
does not vanish,
$$ \begin{vmatrix} 
1 & 1 & \ldots & 1 \\ 
t_1 & t_2 & \ldots & t_n \\ 
t_1^2 & t_2^2 & \ldots & t_n^2\\
\vdots \\
t_1^{n-1} & t_2^{n-1} & \ldots & t_n^{n-1}
\end{vmatrix}\ne 0,$$
for any $t_1<\ldots<t_n$.  Just as Vandermonde systmes are used to solve polynomial interpolation problems, T-systems allows the generalization of the tools from polynomial fitting to a broader class of nonlinear function-fitting problems.  Indeed, given a T-system $u_1,...,u_n$, a {\em generalized polynomial} is a linear combination $U(t) = a_1u_1(t) + \cdots + a_n u_n(t)$.  The machinery of T-systems provides a basis for understanding the properties of these generalized polynomials.  For a survey of T-systems and their applications in statistics and approximation theory, see~\cite{GantmacherKrein, Karlin, KarlinStudden}.  In particular, many of our proofs are adapted from~\cite{KarlinStudden}, and we call out the parallel theorems whenever this is the case.

\subsection{Prior art and related work}\label{SecRel}

Broadly speaking, superresolution techniques enhance the resolution of a sensing system, optical or otherwise;  
{\em resolution} is the distance at which distinct sources appear indistinguishable.
The mathematical problem of localizing point sources from a blurred signal has applications in a wide array of empirical sciences:
astronomers deconvolve images of stars to angular resolution beyond the Rayleigh limit~\cite{astro},
and biologists capture nanometer resolution images of fluorescent proteins~\cite{PALM, FPALM,STORM,FastStorm}.
Detecting neural action potentials from extracellular electrode measurements is fundamental to experimental neuroscience~\cite{EkandhamTranchinaSimoncelli}, and resolving the poles of a transfer function is fundamental to system identification~\cite{SysID}.
To understand a radar signal, one must decompose it into reflections from different sources~\cite{mahdi}; 
and to understand an NMR spectrum, one must decompose it into signatures from different chemicals~\cite{Tang14}.

The mathematical analysis of point source recovery has a long history going back to the work of Prony~\cite{Prony} who pioneered techniques for estimating sinusoidal frequencies. 
Modern analysis focuses on $\ell_1$ based methods, although this line of thought goes back at least to Carath\'eodory~\cite{Caratheodory1,Caratheodory2}. 
It is tempting to apply the theory of compressed sensing~\cite{Baraniuk,CandesRombergTao,CandesWakin,Donoho} to problem~\eqref{Primal}.
If one assumes the point sources are located on a finite grid and are well separated, then some of the standard models for recovery are valid (e.g. incoherency,  restricted isometry property, or restricted eigenvalue property). 
With this motivation, many authors solve the gridded form of the superresolution problem in practice~\cite{BajwaHaupt, BaraniukSteeghs, MalioutovCetin,DuarteBaraniuk, FannjiangStrohmer, HermanStrohmer, Rauhut, StoicaBabu, StoicaBabuLi,FastStorm}.
However, this approach has some significant drawbacks.  The theoretical requirements imposed by the classical models of compressed sensing become more stringent as the grid becomes finer.
Furthermore, making the grid finer can also lead to numerical instabilities and computational bottlenecks in practice.

Despite recent successes in many empirical disciplines, the theory of superresolution imaging remains limited.  Cand\`es and Fernandes-Granada~\cite{CandesGranda} recently made an important contribution to the mathematical analysis of superresolution, demonstrating that semi-infinite optimization could be used to solve the classical Prony problem.  Their proof technique has formed the basis of several other analyses including that of Bendory~\emph{et al}~\cite{BendoryDekelFeuer} and that of Tang~\emph{et al}~\cite{Tang14}.  To better compare with our approach, we briefly describe the approach of~\cite{BendoryDekelFeuer,CandesGranda,Tang14} here.
The classical compressed sensing arguments guarantee recovery of a sparse signal by constructing a {\em dual certificate}~\cite{CandesRombergTao,Fuchs}.   In the continuous setting of superresolution, this amounts to constructing a {\em dual polynomial}: a function of the form $Q(t) = \int \psf(s,t) q(s) dP(s)$ satisfying 
\begin{align*}
|Q(t)|&\le 1\\
|Q(t_i)| & = 1, \quad i =1 ,\ldots,M.
\end{align*}
They construct $q$ as a linear combination of $\psf(s,t_i)$ and $\frac{d}{dt_i} \psf(s,t_i)$.  In particular, they define the coefficients of this linear combination as the least squares solution to the system of equations
\begin{equation}
\label{EqnQ}
\begin{aligned}
Q(t_i) &= \text{sign}(c_i), &&\quad  i =1 ,\ldots,M\\
\frac{d}{dt}Q(t)\Big |_{t=t_i}  &= 0, && \quad i = 1,\ldots,M.
\end{aligned}
\end{equation}
They prove that, under a minimum separation condition on the $t_i$, the system has a unique solution because the matrix for the system is a perturbation of the identity, hence invertible.   

Much of the mathematical analysis on superresolution has relied heavily on the assumption that the point sources are separated by more than some minimum amount
~\cite{BatenkovYomdin,BendoryDekelFeuer,CandesGranda,DonohoStark,Eckhoff,Stable}.  
We note that in practical situations with noisy observations, some form of minimum separation may be necessary.  One can expect, however, that the required minimum separation should go to zero as the noise level decreases: a property that is not manifest in previous results.  Our approach, by contrast, does away with the minimum separation condition by observing that this matrix need not be close to the identity to be invertible.  Instead, we impose Conditions~\ref{main-conditions}~to guarantee invertibility directly.    Not surprisingly, we use techniques from T-systems to construct an analog of the polynomial $Q$ in~\eqref{EqnQ} for our specific problem.

Another key difference is that we consider the weighed objective $\int w(t)d\mu(t)$, while prior work~\cite{BendoryDekelFeuer,CandesGranda,Tang14} has analyzed the unweighted objective $\int d\mu(t)$. We, too, could not remove the separation condition without reweighing by $w(t)$.
In Section~\ref{SecSym} we provide evidence that this mathematically motivated reweighing step actually improves performance in practice.  Weighting has proven to be a powerful tool in compressed sensing, and many works have shown that weighting an $\ell_1$-like cost function can yield improved performance over standard $\ell_1$ minimization~\cite{friedlander2012recovering,khajehnejad2011analyzing,vaswani2010modified,von2007compressed}.  To our knowledge, the closest analogy to our use of weights comes from Rauhut and Ward, who use weights to balance the influence of dynamic range of bases in polynomial interpolation problems~\cite{RauhutWard}.  In the setting of this paper, weights will serve to lessen the influence of sources that have low overlap with the observed samples.

We are not the first to bring the theory of Tchebycheff systems to bear on the problem of recovering finitely supported measures.
De Castro and Gamboa~\cite{deCastroGamboa} prove that a finitely supported positive measure $\mu$ can be recovered exactly from measurements of the form 
\begin{equation*}
\label{EqnBeurling}
\Big \{\int u_0 d\mu,\ldots,\int u_nd\mu \Big \}
\end{equation*}
whenever $\{u_0,\ldots,u_n\}$ form a T-system containing the constant function \mbox{$u_0 = 1$.}
These measurements are almost identical to ours;  if we  set $u_k(t) = \psf(s_k,t)$ for $k = 1,\ldots,n$, where $\{s_1,\ldots,s_n\}=\cS$ is our measurement set, then our measurements are of the form 
$$\{x(s) \; | \; s\in \cS\} =  \Big \{ \int u_1 d\mu, \ldots, \int u_n d\mu \Big \}.$$
 However, in practice it is often  impossible to directly measure the mass $\int u_0 d\mu = \int d\mu$ as required by~\eqref{EqnBeurling}.
Moreover, the requirement that $1,\psf(s_1,t),\ldots,\psf(s_n,t)$ form a T-system does not hold for the Gaussian point spread function $\psf(s,t) = e^{-(s-t)^2}$. Therefore the theory of~\cite{deCastroGamboa} is not readily applicable to superresolution imaging.

We conclude our review of the literature by discussing some prior literature on superresolution without a minimum separation condition. The theory of signals with finite rate of innovation shows that given a superposition of pulses of the form $\sum a_k\psi(t-t_k)$, one can reconstruct the shifts $t_k$ and coefficients $a_k$ from a minimal number of samples ~\cite{DragottiVetterliBlu,Vetterli2002}. 
This holds without any separation condition on the $t_k$ and as long as the base function $\psi$ can reproduce polynomials of a certain degree see~\cite[Section A.1]{DragottiVetterliBlu} for more details. The algorithm used for this reconstruction is however based on polynomial rooting techniques that do not easily extend to higher dimensions.  
In contrast we study sparse recovery techniques which may be more stable to noise and trivially generalize to higher dimensions (although our analysis currently does not). Turning to $\ell_1$ recovery techniques, we would like to mention the work of Fuchs~\cite{Fuchs} in the case that the point spread function is band-limited and the samples are on a regularly-spaced grid. 
This result also does not require a minimum separation condition. However, our results hold for considerably more general point spread functions and sampling patterns. 
Finally, in a recent paper Bendory~\cite{BendoryDiscrete} presents an analysis of $\ell_1$ minimization in a discrete setup by imposing a Rayleigh regularity condition which, in the absence of noise, requires no minimum separation. Our results are of a different flavor, as our setup is continuous. Furthermore we require linear sample complexity while the theory of Bendory~\cite{BendoryDiscrete} requires infinitely many samples.


\section{Proofs}

In this section we prove Theorem~\ref{ThmMain} and Theorem~\ref{ThmGauss}. We start by giving a short list of notation to be used throughout the proofs.

\paragraph{Notation Glossary}

\begin{itemize}
\item We denote the inner product of 
$f,g\in \Lpspace{\dist}$ by $\Lprod{f,g}{\dist} \defn \int f(t) g(t) d\dist(t).$
\item For any differentiable function $f: \bbR^2 \to \bbR$, we denote the derivative in its first argument by $\dx f$ and in its second argument by $\dy f$.
\item For $t \in \bbR$, let $\psf_t(\cdot) = \psf(\cdot,t)$.  
\item Finally, let $K_\obs(t,\tau) = \Lprod{\psf_t, \psf_\tau}{P} = \int \psf(s,t) \psf(s,\tau) dP(s)$.
\end{itemize}

\subsection{Proof of Theorem~\ref{ThmMain}}\label{Sec4}
We prove Theorem \ref{ThmMain} in two steps. 
Firstly, we reduce the proof to constructing a function $q$ such that $\Lprod{q,\psf_t}{P}$ possesses some specific properties.  
\begin{proposition}\label{Prop:DualCert}
\label{PropRecover}
If the first three items of Conditions~\ref{main-conditions}~hold, and if there exists a function $q$ such that
\mbox{$\dualfn(t) := \Lprod{q,\psf_{t}}{\dist}$} satisfies
\begin{align}
\label{EqnDualConstraints}
\dualfn(t_j) &= w(t_j), \quad j = 1,\ldots,\numc \\
\dualfn(t) &< w(t_j), \quad \text{for } t \in [-T,T] \text{ and } t \ne t_j,\notag
\end{align}
then the true measure $\muopt \defn \sum_{j=1}^\numc \coeff_j \delta_{t_j}$ is the unique optimal solution of the program~\ref{Primal}.
\end{proposition}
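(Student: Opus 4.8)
The plan is to treat $q$ (equivalently the dual polynomial $\dualfn$) as a dual certificate and run a weak-duality argument: first check that $\muopt$ is feasible, then show that $\dualfn$ forces every feasible measure to have objective value at least that of $\muopt$, and finally upgrade optimality to uniqueness. Feasibility of $\muopt$ is immediate from~\eqref{EqnMeasure}: the measurement constraints hold by construction, $\supp\muopt=\{t_1,\ldots,t_M\}\subset B$, and $\muopt\ge 0$ since each $\coeff_j>0$.

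The heart of the argument is that $\int\dualfn\,d\mu$ takes the same value for \emph{every} feasible $\mu$. Writing $\dualfn(t)=\Lprod{q,\psf_t}{\dist}=\int\psf(s,t)q(s)\,d\dist(s)$ and interchanging the order of integration, which \CondMild~justifies (it guarantees $\psf_t\in\Lpspace{\dist}$, so the integrand is jointly integrable against the finite measure $\mu$ on the compact set $B$), I obtain
\[
\int \dualfn(t)\,d\mu(t) = \int q(s)\Big(\int\psf(s,t)\,d\mu(t)\Big)d\dist(s) = \int q(s)\,\signal(s)\,d\dist(s),
\]
where the last step uses the feasibility constraint $\signal(s)=\int\psf(s,t)\,d\mu(t)$ on $\cS$, hence $\dist$-almost everywhere. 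The right-hand side is independent of $\mu$, so in particular $\int\dualfn\,d\mu=\int\dualfn\,d\muopt=\sum_j\coeff_j\,\dualfn(t_j)=\sum_j\coeff_j\,w(t_j)=\int w\,d\muopt$, using the equality $\dualfn(t_j)=w(t_j)$ from~\eqref{EqnDualConstraints}.

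Optimality and uniqueness then follow from the pointwise bound in~\eqref{EqnDualConstraints}, namely $\dualfn(t)\le w(t)$ on $[-T,T]$ with equality exactly at the $t_j$. For any feasible $\mu\ge 0$ we get $\int w\,d\mu\ge\int\dualfn\,d\mu=\int w\,d\muopt$, so $\muopt$ is a minimizer. If $\mu$ is also optimal, this chain is tight, forcing $\int\big(w(t)-\dualfn(t)\big)\,d\mu(t)=0$; since the integrand is nonnegative and $\mu\ge 0$, the measure $\mu$ is supported on $\{t:\dualfn(t)=w(t)\}=\{t_1,\ldots,t_M\}$, so $\mu=\sum_j a_j\delta_{t_j}$ with $a_j\ge 0$. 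Subtracting the measurement equations for $\mu$ and $\muopt$ gives $\sum_j(a_j-\coeff_j)\psf_{t_j}=0$ in $\Lpspace{\dist}$, and \CondPSD---nonsingularity of $\int v(s)v(s)^T\,d\dist(s)$, which makes the coordinates of $v$ (in particular the translates $\psf_{t_1},\ldots,\psf_{t_M}$) linearly independent in $\Lpspace{\dist}$---forces $a_j=\coeff_j$, hence $\mu=\muopt$.

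I expect the main obstacle here to be technical rather than conceptual: carefully justifying the Fubini interchange and, relatedly, upgrading the feasibility identity on the discrete set $\cS$ to a $\dist$-almost-everywhere statement so that $\int q\,\signal\,d\dist$ is well defined and equals $\int\dualfn\,d\mu$. All the genuine difficulty of the overall theorem is front-loaded into the hypothesis of this proposition---namely the \emph{existence} of a certificate $q$ with $\dualfn\le w$ and the prescribed equality/strict-inequality pattern---and constructing that $q$ is where \CondOnesMatrix~and the Tchebycheff-system machinery must eventually be brought to bear.
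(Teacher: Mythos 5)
Your proposal is correct and follows essentially the same dual-certificate route as the paper: the key identity $\int Q\,d\mu=\langle q,x\rangle_P$ for every feasible $\mu$, the bound $Q\le w$ with equality only at the $t_j$ to certify optimality and to pin the support of any optimizer to $\{t_1,\ldots,t_M\}$, and \textsc{Independence} to recover the coefficients from the measurement constraints. The only differences are minor refinements of rigor: you argue by weak duality directly instead of invoking Slater's condition and strong duality, and your complementary-slackness step $\int(w-Q)\,d\mu=0$ handles arbitrary (possibly non-atomic) feasible competitors, whereas the paper's uniqueness argument implicitly assumes the competing optimal measure is atomic.
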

This proof technique is somewhat standard \cite{CandesRombergTao,Fuchs}: the function $Q(t)$ is called a {\em dual certificate} of optimality. However, introducing the function $w(t)$ is a novel aspect of our proof.  The majority of arguments have $w(t) = 1$.  Note that when $\int \psf(s,t) dP(s)$ is independent of $t$, then $w(t)$ is a constant and we recover the usual method of proof. 

In the second step we construct $q(s)$ as a linear combination of the $t_i$-centered point spread functions $\psi(s, t_i)$ and their derivatives $\dy\psi(s, t_i)$.
 \begin{theorem} \label{ThmPop}
 Under the~Conditions~\ref{main-conditions}, there exist $\alpha_1,\ldots,\alpha_M,\beta_1,\ldots,\beta_M,c\in \bbR$ such that $Q(t) = \langle q, \psi_t\rangle_P$  satisfies \eqref{EqnDualConstraints}, 
where
 $$q(s) = \sum_{i=1}^M(\alpha_i \psi(s, t_i) + \beta_i \frac{d}{dt_i} \psi(s, t_i)) + c.$$  
 \end{theorem}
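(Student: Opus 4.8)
The plan is to reduce the construction of $q$ to a statement about a single generalized polynomial built from the coordinate functions of $\kappa$. Writing $a = (\alpha_1,\dots,\alpha_M,\beta_1,\dots,\beta_M)$ and $q(s) = \langle a, v(s)\rangle + c$ with $v$ as in \eqref{eq:v-definition}, a direct computation using the definition of $w$ and the definition of $\kappa$ in \eqref{eq:kappa-definition} shows that the dual function factors as
\[
Q(t) = \langle q,\psf_t\rangle_P = w(t)\bigl(\langle a,\kappa(t)\rangle + c\bigr).
\]
Since $w(t) > 0$ on $[-T,T]$ by \CondW, the required relations \eqref{EqnDualConstraints} --- that $Q(t_j) = w(t_j)$ and $Q(t) < w(t)$ for $t\ne t_j$ --- are equivalent to asking that the function $U(t) := \langle a,\kappa(t)\rangle + c$ satisfy $U(t_j) = 1$ for every $j$ and $U(t) < 1$ for all $t\in[-T,T]\setminus\{t_1,\dots,t_M\}$. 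Thus it suffices to produce coefficients $(a,c)\in\bbR^{2M+1}$ with these two properties.

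Next I would recognize $U$ as a generalized polynomial in the $2M+1$ functions $\kappa_1,\dots,\kappa_{2M},1$, and read Condition \CondOnesMatrix as the statement that these functions form a (local) Tchebycheff system in exactly the node configuration we need: $2M$ nodes clustered two-at-a-time near the $t_i$, together with one free node $t\in[-T,T]$. The idea is to choose $(a,c)$ so that $P(t) := U(t) - 1 = \langle a,\kappa(t)\rangle + (c-1)$ has a double zero at each $t_i$. The Hermite conditions $P(t_i) = 0$ and $P'(t_i) = 0$ are $2M$ homogeneous linear equations in the $2M+1$ unknowns $(a,c-1)$, so a nonzero solution exists; and since a T-system is linearly independent, a nonzero coefficient vector yields $P\not\equiv 0$, so each $t_i$ is genuinely a zero of order at least two.

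It then remains to show $P$ has no other zeros and does not change sign. For the first point, suppose $P(t^*) = 0$ for some $t^*\in[-T,T]$ distinct from the $t_i$. Then $(a,c-1)$ is a nonzero null vector of the $(2M+1)\times(2M+1)$ matrix obtained from $\Amat(t_1^-,t_1^+,\dots,t_M^-,t_M^+,t^*)$ of \eqref{eq:lambda-definition} by letting $t_i^\pm\to t_i$ (so that the two columns at each $t_i$ collapse to a value/derivative pair). This contradicts the nonsingularity asserted by \CondOnesMatrix, once that condition is carried into the confluent limit. Hence the only zeros of $P$ on $[-T,T]$ are the $t_i$. Because each is a zero of even order, the nodal/non-nodal zero theory for T-systems forces $P$ to keep a constant sign; flipping the sign of the coefficient vector if necessary (replacing $(a,c)$ by $(-a,2-c)$, which preserves $U(t_j) = 1$) we may take $P\le 0$, so that $U = P + 1 \le 1$ with equality precisely on $\{t_1,\dots,t_M\}$. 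Reading off $\alpha_i,\beta_i$ from $a$ and keeping $c$ as the constant term then yields the desired $q$.

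The main obstacle is the passage from Condition \CondOnesMatrix, which is stated for \emph{distinct} nodes, to the confluent Hermite/Wronskian setting used above, together with the accompanying sign analysis. Concretely, I would study the determinant $D(t) = \det\Amat(t_1^-,t_1^+,\dots,t_M^-,t_M^+,t)$ --- a generalized polynomial in $t$ with simple, sign-changing zeros at the $2M$ distinct clustered nodes --- and show that, after normalization by $\prod_i (t_i^+ - t_i^-)$, it converges to a nonzero limit as $t_i^\pm\to t_i$; this limit is (up to scale) the polynomial $P$, and the merging of each adjacent pair of sign changes is what renders the limiting double zeros non-nodal. Controlling this limit so that the normalized determinant stays bounded away from zero, no zero appears or escapes, and no $t_i$ acquires order three or higher (which the two-nodes-per-$t_i$ form of \CondOnesMatrix does not directly preclude) is the delicate step, and I would carry it out following the Tchebycheff-system arguments of Karlin and Studden.
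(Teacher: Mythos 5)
Your reduction to the generalized polynomial $U(t)=\langle a,\Kvec(t)\rangle+c$ via the factorization $Q(t)=w(t)U(t)$ is exactly the paper's starting point, and the overall architecture --- make $U-1$ vanish to second order at each $t_i$ and use \CondOnesMatrix\ to exclude other contact points --- is also the paper's. But the execution has a genuine gap, which you flag and then defer: both key steps of your plan require a \emph{confluent} (Hermite / extended-T-system) form of \CondOnesMatrix, and that does not follow from the condition as stated. \CondOnesMatrix\ asserts nonsingularity of $\Amat(t_1^-,t_1^+,\ldots,t_M^-,t_M^+,t)$ only for \emph{distinct} nodes; after the column operations that turn each pair $\Kvec(t_i^-),\Kvec(t_i^+)$ into a value/divided-difference pair, the determinant is the original one divided by $\prod_i(t_i^+-t_i^-)$, and a ratio of two quantities tending to zero can perfectly well tend to zero. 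So nonsingularity of the confluent matrix with columns $\Kvec(t_i),\Kvec'(t_i)$ is strictly stronger than \CondOnesMatrix\ and is not available under Conditions~\ref{main-conditions}. The same issue infects your sign analysis: $P(t_i)=P'(t_i)=0$ gives a zero of order \emph{at least} two, not of even order; a zero of exact order three would be nodal and would destroy the one-sided touching, and ruling that out is again an extended-T-system statement that the hypotheses do not supply.

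The paper's proof is engineered precisely to stay out of the confluent regime. It interpolates $w$ at the $2M$ \emph{distinct} points $t_i\pm\epsilon$, where \CondOnesMatrix\ applies verbatim to show these are the only contact points and that all of them are nodal; solvability and convergence of the coefficient vectors as $\epsilon\downarrow 0$ come from \CondPSD\ (the interpolation matrix $\mathbf{K}_\epsilon$ converges to $\int v(s)v(s)^T dP(s)$), not from any limit of \CondOnesMatrix-determinants. The non-sign-change of $w-\Qfn$ at each $t_i$ is then inherited topologically from the fact that $w-\Qfn_\epsilon$ changes sign only at $t_i\pm\epsilon$, and extra zeros of the limit are excluded by perturbing to $\Qfn+\ddd w$ so as to manufacture $2M+1$ \emph{distinct} contact points to which \CondOnesMatrix\ again applies directly. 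If you wish to keep your Hermite-first route, you must either add the extended-T-system property as a hypothesis or prove it separately for the kernel at hand; as written, the ``delicate step'' you postpone is the substance of the theorem.
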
 
\noindent To complete the proof of Theorem~\ref{ThmMain}, it remains to prove Proposition~\ref{PropRecover} and Theorem~\ref{ThmPop}.  Their proofs can be found in Sections~\ref{SecProofPropRecover} and~\ref{SecProofThmPop} respectively. 

\subsection{Proof of Proposition~\ref{PropRecover}}
\label{SecProofPropRecover}
We show that $\muopt$ is the optimal solution of problem~\eqref{Primal} through strong duality.  
The dual of problem~\eqref{Primal} is 
\begin{equation}
\label{EqnDual}
\begin{aligned}
\maximize_q & \quad \Lprod{q,x}{\obs} \\
\textnormal{subject to} & \quad  \Lprod{q,\psf_t}{\obs} \le w(t) \quad \text{for } t\in[-T,T].
\end{aligned}
\end{equation}
Since the primal~\eqref{Primal} is equality constrained, Slater's condition naturally holds, implying strong duality. 
As a consequence, we have 
\begin{equation*}
\Lprod{ q, x }{\dist} =  \int{w(t) d\mu(t)} \iff  \text{ $q$ is dual optimal and $\mu$ is primal optimal.}
\end{equation*}
  
Suppose $q$ satisfies~\eqref{EqnDualConstraints}.  Hence $q$ is dual feasible and we have
\begin{align*}
\Lprod{q,x}{\dist} =& \sum_{j=1}^\numc c_j  \Lprod{ q ,  \psi_{t_j}}{P}  = \sum_{j=1}^M c_j Q ( t_j) \\  =& \int {w(t) d\muopt(t) }.
\end{align*}
Therefore, $q$ is dual optimal and $\muopt$ is primal optimal.
  
Next we show uniqueness. 
Suppose the primal~\eqref{Primal} has another optimal solution 
$$\hat{\mu} = \sum_{j=1}^{\hat{M}} \hat{c}_j \deltafn{\hat{t}_j}$$ 
such that $\{\hat{t}_1,\ldots,\hat{t}_{\hat M}\} \ne \{t_1,\ldots,t_M\} \defn \TSet$.
Then we have
\begin{align*}
    \Lprod{q,x}{\dist}  
     = & \sum_j \hat{c}_j \Lprod{ q, \psi_{\hat{t}_j}}{\obs} \\
     = & \sum_{ \hat{t}_j \in \TSet } \hat{c}_j Q ( \hat{t}_j) + \sum_{\hat{t}_j \notin \TSet} \hat{c}_j Q ( \hat{t}_j)\\
     < & \sum_{ \hat{t}_j \in \TSet } \hat{c}_j w ( \hat{t}_j) + \sum_{\hat{t}_j \notin \TSet} \hat{c}_j w ( \hat{t}_j) 
     =  \int{w(t) d\hat{\mu}(t) } . 
  \end{align*}
Therefore, all optimal solutions must be supported on $\{t_1, \ldots, t_\numc \}$.

We now show that the coefficients of any optimal $\hat \mu$ are uniquely determined. By condition \CondPSD  \quad  the matrix $\int v(s) v(s)^T dP(s)$ is invertible. Since it is also positive semidefinite, then it is positive definite, so, in particular its upper $M\times M$ block is also positive definite. 
$$\det \int  \begin{bmatrix} \psf(s,t_1) \\ \vdots \\ \psf(s,t_M) \end{bmatrix}  \begin{bmatrix} \psf(s,t_1) & \ldots & \psf(s,t_M) \end{bmatrix} dP(s) \ne 0.$$  Hence there must be $s_1,\ldots, s_{M} \in \cS$ such that the matrix with entries $\psf(s_i,t_j)$ is nonsingular.

Now consider some optimal $\hat \mu = \sum_{i=1}^M \hat c_i t_i$.  Since $\hat \mu$ is feasible we have
\begin{equation*}
x(s_j) = \sum_{i=1}^M \hat c_i \psf(s_j,t_i) =  \sum_{i=1}^M c_i \psf(s_j,t_i) \quad \text{for} \quad j = 1,\ldots,M.
\end{equation*}
Since $\psi(s_i, t_j)$ is invertible, we conclude that the coefficients $c_1,\ldots,c_M$ are unique.  
Hence $\muopt$ is the unique optimal solution of~\eqref{Primal}.

\subsection{Proof of Theorem \ref{ThmPop}}
\label{SecProofThmPop}
We construct $Q(t)$ via a limiting interpolation argument due to Krein~\cite{Krein}.  We have adapted some of our proofs (with nontrivial modifications) from the aforementioned text by Karlin and Studden~\cite{KarlinStudden}. We give reference to the specific places where we borrow from classical arguments.

In the sequel, we make frequent use of the following elementary manipulation of determinants:
\begin{lemma}\label{lem:nonsingularMatrix}
If $v_0,\ldots,v_{n}$ are vectors in $\bbR^{n}$, and $n$ is even, then 
$$ \begin{vmatrix} v_1 - v_{0} & \ldots & v_n - v_0 \end{vmatrix} = \begin{vmatrix} v_1 & \ldots & v_n & v_0 \\
1 & \ldots & 1 & 1 \end{vmatrix}.$$
\end{lemma}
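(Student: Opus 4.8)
The plan is to turn the $(n+1)\times(n+1)$ determinant on the right-hand side into the $n\times n$ determinant on the left by a single round of column operations followed by one cofactor expansion. Label the columns of the right-hand matrix $c_1,\dots,c_{n+1}$, so that $c_j=\bigl[\begin{smallmatrix}v_j\\ 1\end{smallmatrix}\bigr]$ for $1\le j\le n$ while $c_{n+1}=\bigl[\begin{smallmatrix}v_0\\ 1\end{smallmatrix}\bigr]$. First I would subtract the last column from each of the first $n$ columns; since adding a multiple of one column to another does not change a determinant, this yields
\[
\begin{vmatrix} v_1 & \ldots & v_n & v_0 \\ 1 & \ldots & 1 & 1 \end{vmatrix}
=
\begin{vmatrix} v_1 - v_0 & \ldots & v_n - v_0 & v_0 \\ 0 & \ldots & 0 & 1 \end{vmatrix},
\]
where each transformed column $c_j-c_{n+1}=\bigl[\begin{smallmatrix}v_j-v_0\\ 0\end{smallmatrix}\bigr]$ now has a $0$ in its last entry, and the bottom row has become $(0,\dots,0,1)$.

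The second step is a Laplace expansion of the right-hand determinant along this bottom row. Every entry of that row vanishes except the trailing $1$ in position $(n+1,n+1)$, so a single cofactor survives, carrying the sign $(-1)^{(n+1)+(n+1)}=1$. Its minor is obtained by deleting the last row and last column, which leaves precisely the $n\times n$ matrix with columns $v_1-v_0,\dots,v_n-v_0$. Hence the right-hand side equals $\begin{vmatrix} v_1-v_0 & \ldots & v_n-v_0\end{vmatrix}$, which is the claim.

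The computation is elementary, so the only real obstacle is careful sign bookkeeping. In the form stated the cofactor sign $(-1)^{2(n+1)}$ is $+1$ regardless of the parity of $n$, so the identity in fact holds for every $n$; the hypothesis that $n$ is even is simply the case relevant to the application (there $n=2M$ and the vectors are the $\kappa(p_j)$ augmented by the row of ones defining $\Amat$). The parity becomes genuinely pertinent only if one prefers to move $v_0$ into the \emph{leading} column on the right, which introduces a factor $(-1)^{n}$; taking $n$ even makes this factor $+1$ and lets one freely rearrange $v_0$. As a cross-check I would also verify the identity by expanding the left-hand determinant multilinearly in its columns: writing each column as $v_j-v_0$ produces $2^n$ terms, all but $n+1$ of which contain two copies of $v_0$ and therefore vanish, and matching the surviving $n+1$ signed minors against the cofactor expansion of $\Amat$ confirms the result (and again exhibits no dependence on parity).
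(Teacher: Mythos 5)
Your proof is correct and follows essentially the same route as the paper's: both reduce to the intermediate determinant $\bigl|\,v_1-v_0 \ \ldots\ v_n-v_0 \ v_0\,;\ 0 \ \ldots\ 0\ 1\,\bigr|$ via column subtraction on one side and cofactor expansion along the bottom row on the other. Your side observation that the evenness of $n$ is not actually needed for the identity as stated (the surviving cofactor sign is $(-1)^{2(n+1)}=+1$ for every $n$) is also accurate.
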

\begin{proof}
By elementary manipulations, both determinants in the statement of the lemma are equal to 
\begin{align*}
\begin{vmatrix}
v_1 - v_0 & \ldots & v_n - v_0 & v_0\\
0 & \ldots & 0 & 1
\end{vmatrix}.
\end{align*}
\end{proof}

In what follows, we consider $\epsilon>0$ such that 
\begin{equation*}
t_1 -\epsilon < t_1 + \epsilon < t_2-\epsilon < t_2 + \epsilon < \cdots < t_{M}-\epsilon < t_M + \epsilon.
\end{equation*}
\begin{definition} 
\label{DefNodal}
A point $t$ is a {\em nodal} zero of a continuous function $f:\mathbb R \to \mathbb R$ if $f(t) = 0$ and $f$ changes sign at $t$. A point $t$ is a {\em non-nodal} zero if $f(t) = 0$ but $f$ does not change sign at $t$.  This distinction is illustrated in Figure~\ref{fig:nodal-zeros}.
\end{definition}
\begin{figure}
\centering
\includegraphics[width=6cm]{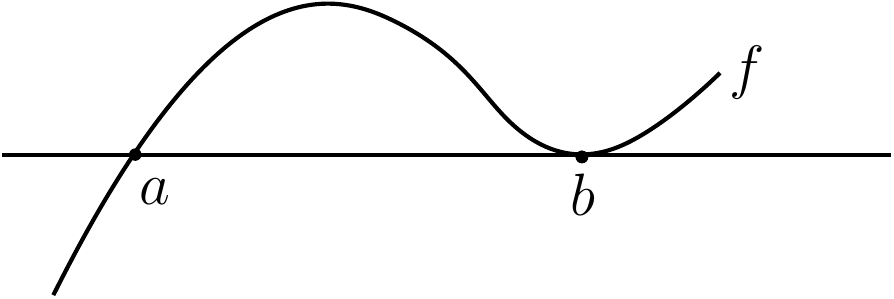}
\caption{The point $a$ is a {\em nodal} zero of $f$, and the point $b$ is a {\em non-nodal} zero of $f$.}\label{fig:nodal-zeros}
\end{figure}
Our proof of Theorem~\ref{ThmPop} proceeds as follows.  With $\epsilon$ fixed, we construct a function 
$$\Qfn_{\epsilon}(t) = \sum_{i=1}^{M} \aie \Kp(t,t_i) +  \bie \dy \Kp(t,t_i)$$ 
such that $\Qfn_{\epsilon}(t) = w(t)$ only at the points $t = t_j \pm\epsilon$ for all $j=1, 2, \ldots, M$ and the points $t_j\pm\epsilon$ are nodal zeros of $\Qfn_{\epsilon}(t) - w(t)$ for all $j=1,2,\dots, M$.
We then consider the limiting function  $\Qfn(t) = \lim\limits_{\epsilon \downarrow 0} \Qfn_\epsilon(t)$, and prove that either 
$\Qfn(t)$ satisfies \eqref{EqnDualConstraints} or \mbox{$2w(t) - \Qfn(t)$} satisfies \eqref{EqnDualConstraints}.  An illustration of this construction is pictured in Figure~\ref{fig:construction}.

\begin{figure}
\centering
\includegraphics[width=11cm]{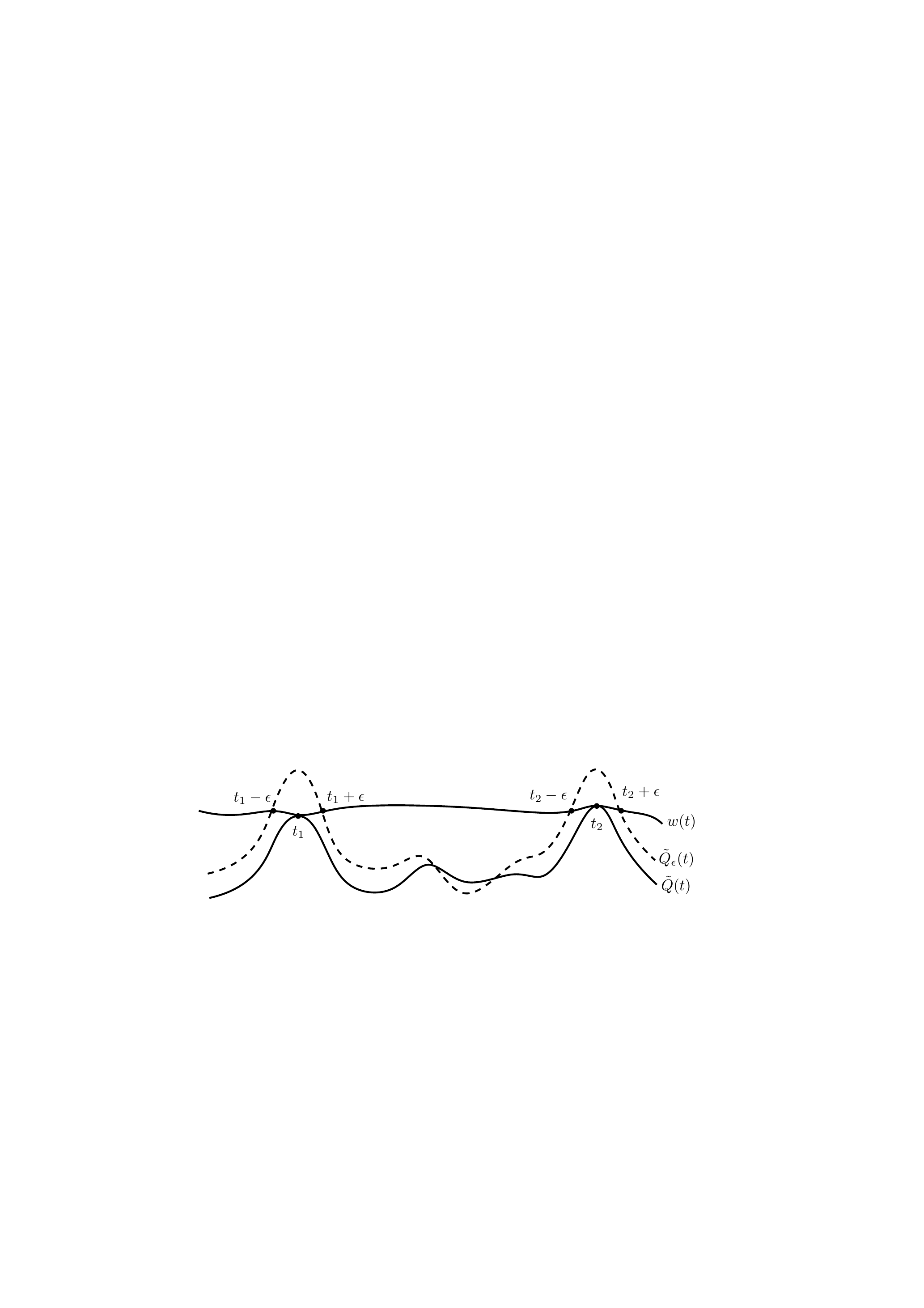}
\caption{The relationship between the functions $w(t)$, $\Qfn_\epsilon(t)$ and $\Qfn(t)$.  The function $\Qfn_\epsilon(t)$ touches $w(t)$ only at $t_i\pm \epsilon$, and these are nodal zeros of $\Qfn_\epsilon(t) - w(t)$.  The function $\Qfn(t)$ touches $w(t)$ only at $t_i$ and these are non-nodal zeros of $\Qfn(t) - w(t)$. \label{fig:construction}}
\end{figure}

We begin with the construction of $\Qfn_\epsilon$.  We aim to find the coefficients $\alpha_{\epsilon}, \beta_{\epsilon}$ to satisfy 
\begin{equation*}
\begin{aligned}
\Qfn_\epsilon(t_i - \epsilon) = w(t_i-\epsilon) \quad \text{and} \quad 
\Qfn_\epsilon(t_i + \epsilon) = w(t_i+\epsilon) \quad \text{for} \quad i = 1,\ldots,M.
\end{aligned}
\end{equation*}
This system of equations is equivalent to the system
\begin{equation}
\label{EqnQepsilon}
\begin{aligned}
\Qfn_\epsilon(t_i - \epsilon) &= w(t_i-\epsilon) \quad \text{for} \quad i = 1,\ldots,M \\
\frac{\Qfn_\epsilon(t_i + \epsilon) - \Qfn_\epsilon(t_i - \epsilon)}{2\epsilon} &= \frac{w(t_i+\epsilon) -  w(t_i-\epsilon)}{2\epsilon} \quad \text{for} \quad i = 1,\ldots,M.
\end{aligned}
\end{equation}
Note that this is a linear system of equations in $\alpha_\epsilon,\beta_\epsilon$ with coefficient matrix given by 
\begin{align*}
\mathbf{K}_\epsilon \defn \left[\begin{array}{ccc|ccc}
&&&&&\\
&K_P(t_j-\epsilon, t_i)& && \partial_2 K_P(t_j - \epsilon, t_i)&\\
&&&&&\\ \hline
&&&&&\\
& \frac1{2\epsilon}\big(K_P(t_j + \epsilon, t_i) - K_P(t_j - \epsilon, t_i)\big) &&& \frac1{2\epsilon}\big(\partial_2 K_P(t_j+\epsilon, t_i) - \partial_2 K_P(t_j - \epsilon, t_i)\big)& \\
 &&&&&
\end{array}\right].
\end{align*}
That is, the equations~\eqref{EqnQepsilon} can be written as
\begin{align*}
\mathbf K_{\epsilon}
\begin{bmatrix}
|\\
\alpha_{\epsilon}\\
|\\
|\\
\beta_{\epsilon}\\
|\\
\end{bmatrix} =  \begin{bmatrix}
w(t_1-\epsilon)\\
\vdots \\
w(t_M-\epsilon)\\
\frac1{2\epsilon}(w(t_1+\epsilon) - w(t_1 - \epsilon))\\
\vdots\\
\frac1{2\epsilon}(w(t_M + \epsilon) - w(t_M - \epsilon))
\end{bmatrix}.
\end{align*}

We first show that the matrix $\mathbf{K}_\epsilon$ is invertible for all $\epsilon$ sufficiently small. Note that as $\epsilon\to 0$ the matrix $\mathbf{K}_{\epsilon}$ converges to
\begin{align*}
{\mathbf{K}} \defn  \left[\begin{array}{ccc|ccc}
&&&&&\\
&K_P(t_j, t_i)& && \partial_2 K_P(t_j, t_i)&\\
&&&&&\\ \hline
&&&&&\\
& \partial_1 K_P(t_j, t_i) &&& \partial_1 \partial_2 K_P(t_j, t_i)& \\
 &&&&&
\end{array}\right] = \int v(s) v(s)^T dP(s),
\end{align*}
which is positive definite by {\CondPSD}. Since the entries of $\mathbf{K}_\epsilon$ converge to the entries of $\mathbf{K}$, there is a $\Delta > 0$ such that $\mathbf{K}_\epsilon$ is invertible for all $\epsilon \in (0, \Delta)$. Moreover, $\mathbf K_{\epsilon}^{-1}$ converges to $\mathbf K^{-1}$ as $\epsilon\to 0$
 and for all $\epsilon < \Delta$, the coefficients are uniquely defined as
\begin{align}\label{EqnConverge}
\begin{bmatrix}
|\\
\alpha_{\epsilon}\\
|\\
|\\
\beta_{\epsilon}\\
|\\
\end{bmatrix} = \mathbf K_{\epsilon}^{-1}  \begin{bmatrix}
w(t_1-\epsilon)\\
\vdots \\
w(t_M-\epsilon)\\
\frac1{2\epsilon}(w(t_1+\epsilon) - w(t_1 - \epsilon))\\
\vdots\\
\frac1{2\epsilon}(w(t_M + \epsilon) - w(t_M - \epsilon))
\end{bmatrix}.
\end{align}
We denote the corresponding function by 
$$\Qfn_{\epsilon}(t) \defn \sum_{i=1}^{M} \aie \Kp(t,t_i) +  \bie \dy\Kp(t,t_i).$$
Before we construct $\Qfn(t)$, we take a moment to establish the following remarkable consequences of the~{\CondOnesMatrix} condition. For all $\epsilon>0$ sufficiently small the following hold:
\begin{enumerate}[(a).]
\item $\Qfn_{\epsilon}(t) = w(t)$ only at the points $t_1 - \epsilon,t_1+\epsilon, \ldots, t_{M}-\epsilon,t_M+\epsilon$.
\item These points $t_1 - \epsilon,t_1+\epsilon, \ldots, t_{M}-\epsilon,t_M+\epsilon$ are nodal zeros of $\Qfn_{\epsilon}(t)-w(t)$.
\end{enumerate}
We adapted the proofs of (a) and (b) (with nontrivial modification) from the proofs of Theorem 1.6.1 and Theorem 1.6.2 of~\cite{KarlinStudden}.

Proof of $(a).$   Suppose for the sake of contradiction that there is a $\tau \in [-T,T]$ such that $\Qfn_{\epsilon}(\tau) = w(\tau)$ and $\tau \notin \{t_1 - \epsilon,t_1 +\epsilon, \ldots,t_M-\epsilon, t_M + \epsilon\}$.  
Then we have the system of $2M$ linear equations
\begin{equation*}
\begin{aligned}
\frac{\Qfn_{\epsilon}(t_j-\epsilon)}{w(t_j-\epsilon)} - \frac{\Qfn_{\epsilon}(\tau)}{w(\tau)} &= 0 \quad j = 1,\ldots,M \\
\frac{\Qfn_{\epsilon}(t_j + \epsilon)}{w(t_j + \epsilon)} - \frac{\Qfn_{\epsilon}(\tau)}{w(\tau)} &= 0 \quad j = 1,\ldots,M.
\end{aligned}
\end{equation*}
Rewriting this in matrix form, the coefficient vector 
$\begin{bmatrix} \alpha_\epsilon & \beta_\epsilon \end{bmatrix} = \begin{bmatrix}\alpha_\epsilon^{[1]}& \cdots&\alpha_\epsilon^{[M]}& \beta_\epsilon^{[1]}&\cdots &\beta_\epsilon^{[M]}\end{bmatrix}$ of $\Qfn_{\epsilon}$ 
satisfies
\begin{equation}
\label{Kdifferenced}
\begin{bmatrix} \alpha_\epsilon & \beta_\epsilon \end{bmatrix}
\Big (\begin{matrix}
\Kvec(t_1-\epsilon) -\Kvec(\tau) & \Kvec(t_1+\epsilon) -\Kvec(\tau)  & \ldots & \Kvec(t_M+\epsilon) -\Kvec(\tau)
\end{matrix} \Big ) =\begin{bmatrix}0 & \ldots & 0 \end{bmatrix}.
\end{equation}
By Lemma~\ref{lem:nonsingularMatrix} applied to the $2M+1$ vectors $v_1 = \Kvec(t_1 -  \epsilon), \ldots, v_{2M} = \Kvec(t_M+\epsilon)$, and $v_0 = \Kvec(\tau)$, the matrix for the system of equations~\eqref{Kdifferenced} is nonsingular if and only if the following matrix is nonsingular:
$$\begin{bmatrix} \Kvec(t_1 - \epsilon) & \ldots & \Kvec(t_M + \epsilon) &\Kvec(\tau) \\ 1 & \ldots & 1 & 1 \end{bmatrix} = \Amat(t_1 - \epsilon, \ldots, t_M+\epsilon, \tau).$$
However, this is nonsingular by the \CondOnesMatrix~ condition. This gives us the contradiction that completes the proof of part $(a)$.

\smallskip

Proof of $(b)$. Suppose for the sake of contradiction that $\Qfn_{\epsilon}(t) - w(t)$ has $\numNod < 2M$ nodal zeros and $\numNnod = 2M - \numNod$ non-nodal zeros.  Denote the nodal zeros by $\{\nodal_1, ..., \nodal_{\numNod}\}$, and denote the non-nodal zeros by $\nnodal_1, \ldots, \nnodal_{\numNnod}$.  In what follows, we obtain a contradiction by doubling the non-nodal zeros of $\Qfn_\epsilon(t) - w(t)$.  We do this by constructing a certain generalized polynomial $u(t)$ and adding a small multiple of it to $\Qfn_\epsilon(t) - w(t)$.

We divide the non-nodal zeros into groups according to whether $\Qfn_{\epsilon}(t) - w(t)$ is positive or negative in a small neighborhood around the zero;  define
$$\cI^- := \{ i \; | \; \Qfn_\epsilon \le w \text{ near } \nnodal_i\} \quad \text{and} \quad \cI^+ := \{ i \; | \; \Qfn_\epsilon \ge w \text{ near } \nnodal_i\}.$$
We first show that there are coefficients $a_0, \ldots, a_M$, and $b_1,\ldots, b_M$ such that the polynomial 
$$u(t) = \sum_{i=1}^{M} a_i \Kp(t,t_i) +  \sum_{i=1}^M b_i \dy\Kp(t,t_i) + a_0 w(t)$$ 
satisfies the system of equations
\begin{equation}
\label{System}
\begin{aligned}
u(\nnodal_j) &= +1 \quad  j \in \mathcal{I}^-\\
u(\nnodal_j) &= -1 \quad j \in \mathcal{I}^+ \\
u(\nodal_i) &= 0 \quad i = 1,\ldots,\numNod\\
u(\extra) & =0,
\end{aligned}
\end{equation}
where $\extra$ is some arbitrary additional point.  
The matrix for this system is 
\begin{equation*}
\mathbf{W} 
\begin{pmatrix}
\Kvec(\nnodal_1)^T & 1 \\
\vdots &\\
\Kvec(\nnodal_{\numNnod})^T & 1\\
\Kvec(\nodal_1)^T & 1 \\
\vdots\\
\Kvec(\nodal_{\numNod})^T & 1\\
\Kvec(\extra)&1
\end{pmatrix} 
\end{equation*}
where $\mathbf{W} = \text{diag}\big(w(\nnodal_1),\ldots,w(\nnodal_{\numNnod}),w(\nodal_1),\ldots,w(\nodal_{\numNod}),w(\extra)\big)$.
This matrix is invertible by \CondOnesMatrix~since the nodal and non-nodal zeros of $\Qfn_\epsilon(t) - w(t)$ are given by $t_1-\epsilon,\ldots,t_M+\epsilon$.
Hence there is a solution to the system~\eqref{System}.

\begin{figure}
\centering
\includegraphics[width=15cm]{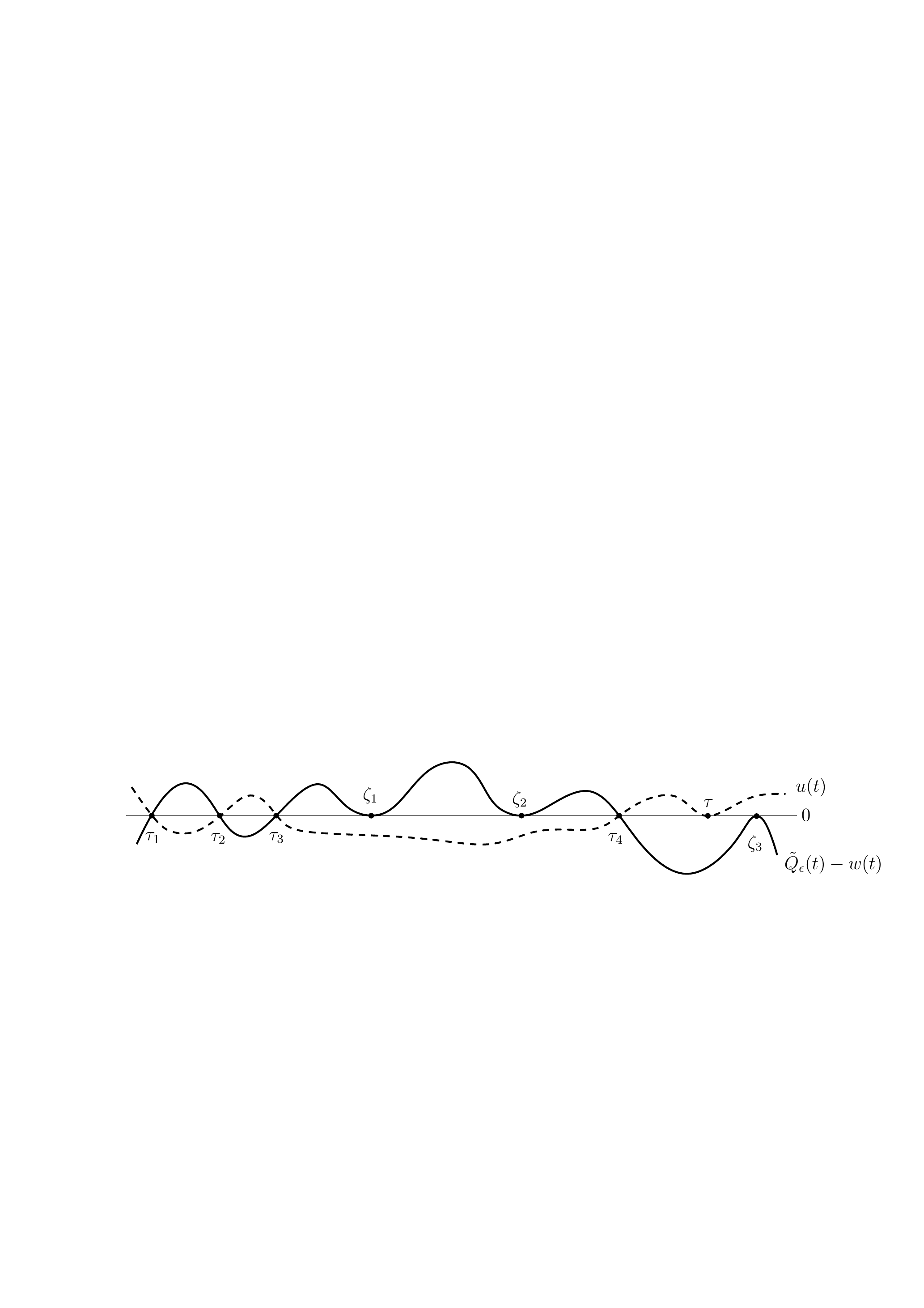}
\caption{The points $\{\tau_1,\tau_2,\tau_3,\tau_4\}$ are nodal zeros of $ \Qfn_\epsilon(t) - w(t)$, and the points $\{\zeta_1,\zeta_2,\zeta_3\}$ are non-nodal zeros.  The function $u(t)$ has the appropriate sign so that $ \Qfn_\epsilon(t) - w(t) + \delta u(t)$ retains nodal zeros at $\tau_i$, and obtains two zeros in the vicinity of each $\zeta_i$.}
\end{figure}

Now consider the function
$$U^{{\delta}}(t) = \Qfn_{\epsilon}(t) + {\delta} u(t) = \sum_{i=1}^{M} [\aie + {\delta} a_i] \Kp(t,t_i) + \sum_{i=1}^M[\bie + {\delta} b_i] \dy\Kp(t,t_i) + \delta a_0 w(t)$$
where ${\delta} > 0$.  By construction, $u(\nodal_i) = 0$, so $U^\delta(t) - w(t)$ has nodal zeros at $\nodal_1,\ldots,\nodal_{\numNod}$.  We can choose $\delta$ small enough so that $U^\delta(t) - w(t)$ vanishes twice in the vicinity of each $\nnodal_i$.  This means that $U^{\delta} (t) - w(t)$ has $2M + \numNnod$ zeros.  
Assuming $\numNnod>0$, select a subset of these zeros $p_1 < \ldots < p_{2M+1}$ such that there are two in each interval $[t_i - \rho,t_i +\rho]$.  This is possible if $\epsilon < \rho$ and $\delta$ is sufficiently small.  We have the system of $2M+1$ equations 
\begin{align*}\sum_{i=1}^{M} [\aie + {\delta} a_i] \Kp(p_1,t_i) + \sum_{i=1}^M [\bie + {\delta} b_i] \dy\Kp(p_1,t_i) &= (1-\delta a_0)w(\extra) \\
\vdots \\
\sum_{i=1}^{M}[\aie + {\delta} a_i] \Kp(p_{2M+1},t_i) +  \sum_{i=1}^M[\bie + {\delta} b_i] \dy\Kp(p_{2M+1},t_i) &= (1-\delta a_0)w(\extra).
\end{align*}
Subtracting the last equation from each of the first $2M$ equations, we find that  
$$
(\alpha_\epsilon^{[1]}+\delta a_1, \ldots, \beta_{\epsilon}^{[M]} + \delta b_M)
\begin{pmatrix}
\Kvec(p_1) - \Kvec(p_{2M+1}) & \ldots & \Kvec(p_{2M}) - \Kvec(p_{2M+1})
\end{pmatrix} =  (0,\ldots,0).
$$
This matrix is nonsingular by Lemma~\ref{lem:nonsingularMatrix} combined with the~{\CondOnesMatrix} condition.  This contradiction implies that $\numNnod = 0$.  This completes the proof of (b).

\smallskip

We now complete the proof by constructing $\Qfn(t)$ from $\Qfn_{\epsilon}(t)$ by sending $\epsilon \to 0$. 
Note that the coefficients $\alpha_{\epsilon}, \beta_{\epsilon}$ converge as $\epsilon\to 0$ since the right hand side of equation~\eqref{EqnConverge} converges to 
$$\mathbf K^{-1} \begin{bmatrix}
w(t_1)\\
\vdots \\
w(t_M)\\
w'(t_1)\\
\vdots\\
w'(t_M)
\end{bmatrix} = \begin{bmatrix}| \\ \alpha \\ | \\ | \\ \beta \\ | \end{bmatrix}.$$
We denote the limiting function by
\begin{equation}
\label{DualPoly}
\Qfn(t) = \sum_{i=1}^{M} \alpha_i \Kp(t,t_i) + \sum_{i=1}^M \beta_i \dy\Kp(t,t_i).
\end{equation}
We conclude that $w(t) - \Qfn(t)$ does not change sign at the $t_i$ since $w(t) - \Qfn_\epsilon(t)$ changes sign only at $t_i \pm \epsilon$.  

We now show that the limiting process does not introduce any additional zeros of $w(t) - \Qfn(t)$.
Suppose $\Qfn(t)$ does touch $w(t)$ at some $\tau_1 \in [-T,T]$ with $\tau_1 \ne t_i$ for any $i=1, ..., M$.  Since $w(t) - \Qfn(t)$ does not change sign, the points $t_1,\ldots,t_M,\tau_1$ are non-nodal zeros of $w(t) - \Qfn(t)$.  We find a contradiction by constructing a polynomial with two nodal zeros in the vicinity of each of these $M+1$ points (but possibly only one nodal zero in the vicinity of $\tau_1$ if $\tau_1=T$ or $\tau_1 = -T$).

For sufficiently small $\ddd>0$, the polynomial 
$$W_{\ddd}(t) = \Qfn(t) + \ddd w(t)$$
attains the value $w(t)$ twice in the vicinity of each $t_i$ and twice in the vicinity of $\tau_1$.  In other words there exist $p_1 < \ldots < p_{2M+2}$ such that
$W_{\ddd}(p_i) = w(p_i).$
Therefore
$$\Qfn(p_i) = (1-\ddd)w(p_i) \quad \text{for} \quad i = 1,\ldots,2M+2,$$
and so $\frac{\Qfn(p_i)}{w(p_i)} - \frac{\Qfn(p_{2M+1})}{w(p_{2M+1})} = 0$ for $i=1,2,...,2M$.  Thus, the coefficient vector for the polynomial $\Qfn(t)$ lies in the left nullspace  of the matrix
$$
\begin{pmatrix}
\Kvec(p_1) - \Kvec(p_{2M+1}) & \ldots & \Kvec(p_{2M}) - \Kvec(p_{2M+1})
\end{pmatrix}.
$$
However, this matrix is nonsingular by~Lemma~\ref{lem:nonsingularMatrix} and the {\CondOnesMatrix} condition.

Collecting our results, we have proven that $\Qfn(t) - w(t) = 0$ if and only if $t = t_i$ and that $\Qfn(t) - w(t)$ does not change sign when $t$ passes through $t_i$.  
Therefore one of the following is true
$$w(t) \ge \Qfn(t) \quad \text{or} \quad \Qfn(t) \ge w(t)$$
with equality iff $t = t_i$.  In the first case, $Q(t) = \Qfn(t)$ fulfills the prescriptions~\eqref{EqnDualConstraints} with 
$$q(t) = \sum_{i=1}^M \alpha_i \psf(s,t_i) + \beta_i \frac{d}{dt_i}\psf(s,t_i).$$  
In the second case, $Q(t) = 2w(t) - \Qfn(t)$ satisfies~\eqref{EqnDualConstraints} with 
$$q(t) = 2 - \sum_{i=1}^M \alpha_i \psf(s,t_i) + \beta_i \frac{d}{dt_i}\psf(s,t_i).$$

\subsection{Proof of Theorem~\ref{ThmGauss}}\label{Sec5}
\label{SecProofThmGauss}
{\CondMild} and~{\CondW} naturally hold for the Gaussian pointspread function $\psf(s,t) = e^{-(s-t)^2}$.
\CondPSD~ holds because $\psi(s, t_1), \ldots, \psi(s, t_M)$ together with their derivatives $\dy\psi(s, t_1), \ldots, \dy\psi(s, t_M)$ form a T-system (see for example~\cite{KarlinStudden}).
This means that for any $s_1 < \ldots < s_{2M} \in \bbR$, 
$$\big| v(s_1) \ldots v(s_{2M})\big| \ne 0,$$
and the determinant always takes the same sign.  
Therefore, by an integral version of the Cauchy-Binet formula for the determinant (cf. \cite{Karlin}), 
$$\Big | \int v(s) v(s)^T dP(s) \Big | = (2M)! \int\limits_{s_1 < \ldots < s_{2M}} \big |v(s_1) \ldots v(s_{2M}) \big | \begin{vmatrix} v(s_1)^T \\ \vdots \\ v(s_{2M})^T \end{vmatrix}dP(s_1) \ldots dP(s_{2M}) \ne 0.$$

To establish the~{\CondOnesMatrix} condition, we prove the slightly stronger statement:
\begin{equation}
\label{stronger}
 | \Amat(p_1, \ldots, p_{2M+1}) | = \left |  \int \begin{bmatrix}v(s)\\1\end{bmatrix}
\begin{bmatrix} \frac{\psf(s,p_1)}{w(p_1)} & \ldots & \frac{\psf(s,p_{2M+1})}{w(p_{2M+1})} \end{bmatrix}dP(s)\right | \ne 0
\end{equation}
for any distinct $p_1,\ldots,p_{2M+1}$.  When $p_1,\ldots,p_{2M+1}$ are restricted so that two points $p_i,p_j$ lie in each ball $(t_k-\rho,t_k +\rho)$, we recover the statement of \CondOnesMatrix.

We prove~\eqref{stronger} with the following key lemma.
\begin{lemma}\label{GaussianLebesgue}
For any $s_1< \ldots < s_{2M+1}$ and $t_1 < \ldots < t_{M}$,
$$\begin{vmatrix}
e^{-(s_1-t_1)^2} & \cdots & e^{-(s_{2M+1} - t_1)^2}\\
-(s_1-t_1)e^{-(s_1-t_1)^2} & \cdots & -(s_{2M+1} - t_1) e^{-(s_{2M+1} - t_1)^2}\\
\vdots & & \vdots\\
e^{-(s_1-t_M)^2} & \cdots & e^{-(s_{2M+1} - t_M)^2}\\
-(s_1-t_M)e^{-(s_1-t_M)^2} & \cdots & -(s_{2M+1} - t_M) e^{-(s_{2M+1} - t_M)^2}\\
1 & \cdots & 1
\end{vmatrix}\neq 0.$$
\end{lemma}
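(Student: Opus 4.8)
The plan is to read the displayed determinant as the defining test matrix of a Tchebycheff system and to reduce the claim to a zero-count. Concretely, the determinant is nonzero for every $s_1 < \cdots < s_{2M+1}$ if and only if the $2M+1$ functions $e^{-(s-t_i)^2}$ and $(s-t_i)e^{-(s-t_i)^2}$ ($i=1,\dots,M$) together with the constant $1$ form a T-system in the variable $s$; equivalently, every nontrivial generalized polynomial
\[ U(s) = c + \sum_{i=1}^M \bigl(\alpha_i + \beta_i (s-t_i)\bigr) e^{-(s-t_i)^2} \]
has at most $2M$ real zeros. Since multiplying by the strictly positive factor $e^{s^2}$ leaves the zero set unchanged, I would instead study $W(s) := e^{s^2} U(s) = c\,e^{s^2} + \sum_{i=1}^M p_i(s)\,e^{2 t_i s}$, where each $p_i$ has degree at most $1$ (the constants $e^{-t_i^2}$ are harmless). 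The exponents $2t_1 < \cdots < 2t_M$ are distinct, so when $c=0$ this is a genuine exponential polynomial and the classical bound on zeros of exponential sums gives at most $2M-1$ zeros. The entire difficulty is the term $c\,e^{s^2}$.

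To treat $c\neq 0$, introduce the constant-coefficient operator $Q(D) = \prod_{i=1}^M (D - 2t_i)$ and set $L = Q(D)^2$, a composition of $2M$ first-order factors $(D - 2t_i)$. By construction $L$ annihilates every $e^{2t_i s}$ and $s\,e^{2 t_i s}$, hence it kills the whole exponential-polynomial part of $W$ and leaves $L W = c\, Q(D)^2\, e^{s^2}$. The argument then rests on two facts. First, a generalized Rolle inequality: for any real $\mu$ and entire $f$ one has $\#Z\bigl((D-\mu)f\bigr) \ge \#Z(f) - 1$, because $(D-\mu)f = e^{\mu s}\,\bigl(e^{-\mu s} f\bigr)'$ and ordinary Rolle applies to $e^{-\mu s}f$. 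Iterating over the $2M$ factors of $L$ yields $\#Z(W) \le \#Z(LW) + 2M$. Thus it suffices to prove that $LW$ never vanishes when $c\neq 0$, i.e.\ that $Q(D)^2 e^{s^2}$ has no real zero.

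This positivity is the crux, and I would establish it through the Gaussian integral representation $e^{s^2} = \pi^{-1/2}\int_{\mathbb R} e^{2 s \xi - \xi^2}\, d\xi$. Differentiating under the integral and using $Q(D) e^{2 s \xi} = Q(2\xi)\, e^{2 s \xi}$ gives
\[ Q(D)^2 e^{s^2} = \frac{1}{\sqrt{\pi}} \int_{\mathbb R} Q(2\xi)^2\, e^{2 s \xi - \xi^2}\, d\xi . \]
The integrand is nonnegative and strictly positive except at the finitely many $\xi$ with $Q(2\xi)=0$, so the integral is strictly positive for every real $s$. Hence $LW = c\,Q(D)^2 e^{s^2}$ is nowhere zero, the Rolle bound gives $\#Z(W)\le 2M$, and combined with the $c=0$ case this shows $U$ has at most $2M$ zeros, so the determinant is nonzero.

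The main obstacle is precisely the constant row. The remaining $2M$ functions—the Gaussian translates and their derivatives—form a T-system on their own (by total positivity of the Gaussian kernel, or by the exponential-sum bound above), but the constant is not a translate of the point spread function and it breaks a direct count: after rescaling it becomes $e^{s^2}$, which behaves like ``infinitely many'' exponentials and is annihilated by no finite constant-coefficient operator. The remedy is to annihilate only the exponential-polynomial part with $Q(D)^2$ and then certify that $Q(D)^2 e^{s^2}$ stays positive via the integral representation. Finally, exactly as in the verification of \CondPSD, I would pass from the pointwise nonvanishing of Lemma~\ref{GaussianLebesgue} to the integral statement~\eqref{stronger} by noting that the determinant keeps a constant sign and invoking the integral Cauchy--Binet formula.
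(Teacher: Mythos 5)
Your proposal is correct, and it follows the paper's overall skeleton---rescale by $e^{s^2}$ to convert the problem into bounding the real zeros of $c\,e^{s^2}+\sum_i p_i(s)e^{2t_is}$ with $\deg p_i\le 1$, then annihilate the exponential-polynomial part with a composition of $2M$ first-order factors $(D-2t_i)$ while losing at most one zero per factor by a weighted Rolle argument---but it diverges at the crux, namely why the fully differentiated function $Q(D)^2e^{s^2}$ (with $Q(D)=\prod_i(D-2t_i)$) never vanishes. The paper proves this by an explicit induction: it tracks the polynomials $p_i(s)=2sp_{i-1}(s-c_i)+p_{i-1}'(s-c_i)$ produced by the iterated operator $g\mapsto \frac{d^2}{ds^2}\bigl[g\,e^{-\mu s}\bigr]$ and verifies the sum-of-squares identity $f_i=\sum_{j=0}^i\frac{1}{2^jj!}\bigl(p_i^{(j)}\bigr)^2$ with the top square equal to the positive constant $2^ii!$. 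You instead write $e^{s^2}=\pi^{-1/2}\int_{\bbR}e^{2s\xi-\xi^2}\,d\xi$ and push the operator onto the integrand, obtaining $Q(D)^2e^{s^2}=\pi^{-1/2}\int Q(2\xi)^2e^{2s\xi-\xi^2}\,d\xi>0$; this is a genuinely different and noticeably cleaner certificate of positivity, it sidesteps the delicate polynomial bookkeeping entirely, and it suggests how the lemma might extend to any point spread function whose rescaled form admits a Laplace representation with positive density. What the paper's route buys in exchange is a completely elementary, self-contained computation (no interchange of differentiation and integration to justify) and an explicit Hermite-type identity of independent interest. Two small housekeeping points you should make explicit in a final write-up: the passage from ``determinant nonzero for all $s_1<\cdots<s_{2M+1}$'' to ``every nontrivial combination has at most $2M$ zeros'' requires noting that a nontrivial coefficient vector cannot give the zero function (your $c\ne 0$ case handles this automatically since $LW$ is nowhere zero, and the $c=0$ case follows from linear independence of $e^{2t_is}$ and $se^{2t_is}$); and the Rolle count should be phrased for distinct zeros, which is all the T-system criterion needs. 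Neither is a gap, just a line each.
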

Before proving this lemma, we show how it can be used to prove~\eqref{stronger}.
By Lemma \ref{GaussianLebesgue}, we know in particular that for any $s_1<\cdots < s_{2M+1}$,
$$\det\begin{bmatrix}v(s_1) & \cdots & v(s_{2M+1})\\
1 & \cdots & 1\end{bmatrix}\neq 0$$
and is always the same sign.
Moreover, for any $s_1 < \cdots < s_{2M+1}$, and any $p_1<\ldots<p_{2M+1}$,
$$\det\begin{bmatrix} \psf(s_1,p_1) & \ldots & \psf(s_1, p_{2M+1})\\
\vdots\\
\psf(s_{2M+1},p_1) & \ldots & \psf(s_{2M+1}, p_{2M+1})\end{bmatrix} > 0.$$
Any function with this property is called {\em totally positive} and it is well known that the Gaussian kernel totally positive~\cite{KarlinStudden}.
Now, to show that~{\CondOnesMatrix} holds for the finite sampling measure~$P$, we use an integral version of the Cauchy-Binet formula for the determinant:
\begin{equation}
\begin{aligned}
    &\left |  \int \begin{bmatrix}v(s)\\1\end{bmatrix}
\begin{bmatrix} \frac{\psf(s,p_1)}{w(p_1)} & \ldots & \frac{\psf(s,p_{2M+1})}{w(p_{2M+1})} \end{bmatrix}dP(s)\right | =  \\
& \quad= (2M+1)! \int\displaylimits_{s_1< \cdots< s_{2M+1}}\begin{vmatrix}v(s_1) & \cdots & v(s_{2M+1})\\
1 & \cdots & 1\end{vmatrix}
\begin{vmatrix} \frac{\psf(s_1,p_1)}{w(p_1)} & \ldots & \frac{\psf(s_1, p_{2M+1})}{w(p_{2M+1})}\\
\vdots\\
\frac{\psf(s_{2M+1},p_1)}{w(p_1)} & \ldots & \frac{\psf(s_{2M+1}, p_{2M+1})}{w(p_{2M+1})}\end{vmatrix}
dP(s_1)\ldots dP(s_{2M+1}).
\end{aligned}
\end{equation}
The integral is nonzero since all integrands are nonzero and have the same sign.  
This proves~\eqref{stronger}.

\begin{proof}[Proof of Lemma \ref{GaussianLebesgue}]
\label{SecGaussLemmaProof}
Multiplying the $2i-1$ and $2i$-th row by $e^{t_i^2}$ and the $i$-th column by $e^{s_i^2}$, and subtracting $t_i$ times the $2i-1$-th row from the $2i$-th row, we obtain that we equivalently have to show that

$$\begin{vmatrix}
e^{s_1 t_1} & e^{s_2 t_1} & \ldots & e^{s_{2M+1} t_1} \\
s_1e^{s_1 t_1} & s_2e^{s_2 t_1} & \ldots & s_{2M+1}e^{s_k t_1} \\
e^{s_1 t_2} & e^{s_2 t_2} & \ldots & e^{s_{2M+1} t_2} \\
\vdots \\
e^{s_1 t_{M}} & e^{s_2 t_{M}} & \ldots & e^{s_{2M+1} t_{M}} \\
s_{1}e^{s_1 t_{M}} & s_{2}e^{s_2 t_{M}} & \ldots & s_{2M+1}e^{s_{2M+1} t_{M}} \\
e^{s_1^2} & e^{s_2^2} & \ldots & e^{s_{2M+1}^2}
\end{vmatrix} \neq 0.$$
The above matrix has a vanishing determinant if and only if there exists a nonzero vector 
$$(a_1, b_1, ..., a_M, b_M, a_{M+1})$$
in its left null space. This vector has to have nonzero last coordinate since by Example 1.1.5. in \cite{KarlinStudden}, the Gaussian kernel is extended totally positive and therefore the upper $2M\times 2M$ submatrix has a nonzero determinant. Therefore, we assume that $a_{M+1} = 1$. Thus, the matrix above has a vanishing determinant if and only if the function 
$$\sum_{i=1}^M (a_i + b_i s) e^{t_i s} + e^{s^2}$$
 has at least the $2M+1$ zeros $s_1 < s_2 < ... < s_{2M+1}$.
Lemma \ref{LemZeros}, applied to $r = M$ and $d_1 = \cdots = d_M = 1$, establishes that this is impossible.   To complete the proof of Lemma~\ref{GaussianLebesgue}, it remains to state and prove Lemma~\ref{LemZeros}.
\end{proof}

\begin{lemma}
\label{LemZeros}
Let $d_1,...,d_r\in\mathbb N$. The function 
$$\phi_{d_1,...,d_r} (s) = \sum_{i=1}^r (a_{i0} + a_{i1} s + \cdots + a_{i (2d_i - 1)}s^{2d_i - 1}) e^{t_i s} + e^{s^2}$$
has at most $2(d_1 + \cdots + d_r)$ zeros.
\end{lemma}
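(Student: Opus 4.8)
The plan is to count zeros with multiplicity and to prove the sharp bound $2D$, where I write $D = d_1 + \cdots + d_r$, by combining a differentiation/Rolle argument with a parity observation that comes from the domination of every other summand by $e^{s^2}$. First I would record two structural facts. Since $e^{s^2}$ grows faster than any $s^k e^{t_i s}$, the function $\phi_{d_1,\ldots,d_r}(s) \to +\infty$ as $s \to \pm\infty$; in particular it is eventually positive at both ends, so its real zeros have even total multiplicity. Second, dividing by the nowhere-vanishing factor $e^{s^2}$ preserves zeros and their multiplicities, so it is equivalent to count the zeros of
\[
h(s) = \sum_{i=1}^r p_i(s)\, e^{t_i s - s^2} + 1, \qquad \deg p_i = 2d_i - 1,
\]
which satisfies $h(s) \to 1$ at $\pm\infty$.

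The engine is differentiation together with Rolle's theorem in the form $Z(f) \le Z\bigl((\tfrac{d}{ds}-c)f\bigr) + 1$, where $Z$ denotes the number of zeros with multiplicity and $(\tfrac{d}{ds}-c)f = e^{cs}\tfrac{d}{ds}(e^{-cs}f)$. Differentiating $h$ once annihilates the additive constant and produces $h'(s) = \sum_i q_i(s)\, e^{t_i s - s^2}$ with $\deg q_i = 2d_i$, the degree rising by one because differentiating the weight $e^{-s^2}$ contributes a factor $-2s$. In the base case $r=1$ this already closes the argument: $h'$ is a single polynomial of degree $2d_1$ times a nonvanishing factor, so $Z(h') \le 2d_1$ and hence $Z(h) \le 2d_1 + 1$; the parity observation then forces $Z(h) \le 2d_1 = 2D$. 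This base case also exposes the role of the odd-degree hypothesis: $\deg p_i = 2d_i - 1$ makes $\deg q_i$ even, so the Rolle bound $2d_1+1$ is odd and the parity argument removes exactly the one spurious unit.

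For $r \ge 2$ I would induct on the number of frequencies, peeling off one exponential at a time: divide by $e^{t_1 s}$ and differentiate $2d_1$ times to annihilate $p_1 e^{t_1 s}$, arriving at a function with $r-1$ frequencies. The main obstacle, and the real content of the lemma, is that this reduction inflates the term attached to $e^{s^2}$: the additive constant is replaced by a polynomial whose degree grows by $2d_1$, and a naive induction consequently loses an amount proportional to $r$, yielding only $2D + r$ in place of the required $2D$. Unlike the pure exponential-polynomial case, where the degree of the surviving summand does not grow, here one cannot annihilate the Gaussian term by any constant-coefficient operator, so the slack does not cancel by itself.

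Overcoming this inflation is the crux. My plan is to strengthen the inductive statement to functions of the form $\sum_i p_i(s) e^{t_i s} + P(s) e^{s^2}$ and to prove the equivalent assertion that the collection $\{e^{s^2}\} \cup \{\, s^j e^{t_i s} : 1\le i \le r,\ 0 \le j \le 2d_i - 1 \,\}$, of cardinality $2D+1$, is a Tchebycheff system: members with vanishing $e^{s^2}$-coefficient are ordinary generalized polynomials in the exponential-polynomial T-system and have at most $2D-1$ zeros, while a member with nonzero $e^{s^2}$-coefficient is exactly $\phi_{d_1,\ldots,d_r}$ and is claimed to have at most $2D$. Establishing the latter is where the machinery already used in the paper must enter: total positivity of the Gaussian kernel and the Wronskian (ECT-system) criterion adapted from Karlin and Studden, supplemented by the parity coming from the domination of $e^{s^2}$ and by the odd-degree hypothesis. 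I expect this verification—showing that the derived system stays Tchebycheff of the correct dimension despite the degree growth, so that the count collapses from $2D+r$ to the sharp $2D$—to be the hardest and most delicate step, whereas the reduction to $h$, the base case, and the parity bookkeeping are routine.
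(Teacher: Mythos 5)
Your reduction to $h$ and your $r=1$ base case are fine, and you have correctly located the crux: under any differentiation scheme the polynomial coefficient attached to the Gaussian term grows, so a naive Rolle count does not close at $2D$ where $D=d_1+\cdots+d_r$. But the proposal stops exactly there. The ``strengthened induction'' you offer --- that $\{e^{s^2}\}\cup\{s^je^{t_is}\}$ is a Tchebycheff system of cardinality $2D+1$ --- is not a new tool: a linear combination with nonzero $e^{s^2}$-coefficient \emph{is} (a rescaling of) $\phi_{d_1,\ldots,d_r}$, so that claim is just a restatement of the lemma together with the standard bound for exponential polynomials, and the appeal to total positivity and Wronskian/ECT criteria is a gesture rather than an argument. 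As written, the proof has a genuine gap for $r\ge 2$.

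The step you are missing is the paper's mechanism for controlling the Gaussian term. The paper sets $g_0=\phi_{d_1,\ldots,d_r}$ and iterates $g_k=\frac{d^2}{ds^2}\bigl[g_{k-1}(s)e^{c_ks}\bigr]$, with the exponential multipliers chosen to shift each frequency $t_j$ to zero in turn; each \emph{second} derivative strips two degrees from the current (now purely polynomial) term, so after $D$ steps every $p_i(s)e^{t_is}$ summand is annihilated and Rolle gives $Z(g_0)\le Z(g_D)+2D$. The entire difficulty is then to show $Z(g_D)=0$ even though the Gaussian summand has by now accumulated a polynomial factor of degree $2D$. The paper does this with an explicit identity: the accumulated factor is $f_i(s)=\sum_{j=0}^{i}\frac{1}{2^jj!}\bigl(p_i^{(j)}(s)\bigr)^2$, where $p_i(s)=2sp_{i-1}(s-c_i)+p_{i-1}'(s-c_i)$ has degree $i$ and leading coefficient $2^i$, so the last square is the positive constant $2^ii!$ and $f_i>0$ everywhere; hence $g_D$ has no zeros at all. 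Without this sum-of-squares positivity (or an equivalent statement about the evolved Gaussian coefficient), the induction you sketch cannot reach $2D$: your parity observation saves only a single unit and cannot absorb a loss that grows with $r$ or $D$.
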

\begin{proof}
We are going to show that $\phi_{d_1,...,d_r}(s)$ has at most $2(d_1 + \cdots + d_r)$ zeros as follows. Let 
$$g_0(s)  = \phi_{d_1,...,d_r}(s).$$
For $k=1, ..., d_1 + \cdots + d_r$, let
\begin{equation}\label{defG}
g_k(s) =\begin{cases}
 \frac{d^2}{ds^2}\big[g_{k-1}(s)e^{(-t_j + t_1 + \cdots + t_{j-1})s}\big], & \text{ if }k = d_1 + \cdots + d_{j-1}+1\text{ for some }j,\\
 \frac{d^2}{ds^2}\big[g_{k-1}(s)\big], & \text{ otherwise}.
 \end{cases}
\end{equation}
If we show that $g_{d_1 + \cdots + d_r}(s)$ has no zeros, then, $g_{d_1 + \cdots + d_r-1}(s)$ has at most two zeros has at most two zeros, counting with multiplicity. By induction, it will follow that $g_0(s)$ has at most $2(d_1 + \cdots + d_r)$ zeros, counting with multiplicity.
Note that if $d_1 + \cdots + d_{j-1} \leq k < d_1 + \cdots + d_{j-1} + d_j$, then
\begin{equation*}
\begin{aligned}
g_k(s) =& (\tilde a_{j, 2(k - d_1 + \cdots + d_{j-1})} + \cdots + \tilde a_{j, (2d_{j} -1)} s^{2d_{j} - 1 - 2(k - d_1 + \cdots + d_{j-1})}) + \\
&+  \sum_{i=j+1}^r (\tilde a_{i0} + \cdots + \tilde a_{i (2d_i - 1)} s^{2d_i - 1})e^{(t_i - (t_1 +\cdots + t_{j-1}))r} + c f_i(r)e^{r^2}
\end{aligned}
\end{equation*}
where $c>0$ is a constant and $r := s - c_i$. We are going to show that $f_i(r)$ is a sum of squares polynomial such that one of the squares is a positive constant. This would mean that $g_k(s) = f_k(s)e^{s^2}$ has no zeros.

Denote
\begin{align*}
p_0(s) &= 1\\
p_1(s) &= 2s\\
&\vdots\\
p_i(s) & = 2sp_{i-1}(s - c_i) + p_{i-1}'(s-c_i),
\end{align*}
where $c_1, ..., c_k$ are constants. It follows by induction that the degree of $p_i(s)$ is $\deg(p_i) = i$ and the leading coefficient of $p_i(s)$ is $2^i$.

We will show by induction that 
\begin{align*}
f_i(s) &= p_i(s)^2 + \frac12p_i'(s)^2 + \cdots + \frac1{2^ii!}p_i^{(i)}(s)^2\\
&= \sum_{j=0}^i \frac1{2^j j!} p_i^{(j)}(s)^2.
\end{align*}

When $i=0$, we have that $f_0(s) = 1$ and $\sum_{j=0}^0 \frac1{2^jj!}p_0^{(j)}(s)^2 = 1$. We are going to prove the general statement by induction.
Suppose the statement is true for $i-1$.  By the relationship~\eqref{defG}, we have
\begin{align}\label{f2}
f_i(s)e^{s^2} = &\frac{d^2}{ds^2}\big[e^{s^2}f_{i-1}(s-c_i)\big] = \frac{d^2}{ds^2}\big[e^{s^2}\sum_{j=0}^{i-1} \frac1{2^jj!}p_{i-1}^{(j)}(s-c_i)^2\big]\\\notag
= &\sum_{j=0}^{i-1} \frac{e^{s^2}}{2^jj!} \Big \{ 2p_{i-1}^{(j+2)}(s-c_i)p_{i-1}^{(j)}(s-c_i) + 2p_{i-1}^{(j+1)}(s-c_i)^2 \\
& + (4s^2+2)p_{i-1}^{(j)}(s-c_i)^2 + 8sp_{i-1}^{(j)}(s-c_i)p_{i-1}^{(j+1)}(s-c_i) \Big \} \notag
\end{align}
We need to show that this expression is equal to
$e^{s^2}(\sum_{j=0}^{i}  \frac{p_{i}^{(j)}(s)^2}{2^jj!} )$.
Since
$$p_i(s)  = 2sp_{i-1}(s - c_i) + p_{i-1}'(s-c_i),$$ it follows by induction that
$p_i^{(j)}(s) = 2jp_{i-1}^{(j-1)}(s-c_i) + 2sp_{i-1}^{(j)}(s-c_i) + p_{i-1}^{(j+1)}(s-c_i).$
Therefore we obtain
\begin{equation}
\begin{aligned}
\label{f1}
e^{s^2}(\sum_{j=0}^{i}  \frac{p_{i}^{(j)}(s)^2}{2^jj!} )=&e^{s^2}\sum_{j=0}^{i} \frac1{2^jj!} \Big[ 2jp_{i-1}^{(j-1)}(s-c_i) + 2sp_{i-1}^{(j)}(s-c_i) + p_{i-1}^{(j+1)}(s-c_i)\Big]^2.\\
=&e^{s^2}\sum_{j=0}^{i}\frac1{2^jj!} \Big[ 4j^2 p_{i-1}^{(j-1)}(s-c_i)^2 + 4s^2p_{i-1}^{(j)}(s-c_I)^2 + p_{i-1}^{(j+1)}(s-c_i)^2+\\ 
&\qquad +8jsp_{i-1}^{(j-1)}(s-c_i)p_{i-1}^{(j)}(s-c_i) + \\ 
&\qquad +4sp_{i-1}^{(j)}(s-c_i)p_{i-1}^{(j+1)} + 4jp_{i-1}^{(j-1)}(s-c_i)p_{i-1}^{(j+1)}(s-c_i) \Big]
\end{aligned}
\end{equation}

There are four types of terms in the sums (\ref{f2}) and (\ref{f1}):
\begin{equation*}
p_{i-1}^{(j)}(s-c_i)^2, \quad s^2p_{i-1}^{(j)}(s-c_i)^2, \quad p_{i-1}^{(j-1)}(s-c_i)p_{i-1}^{(j)}(s-c_i), \quad \text{ and } \quad  sp_{i-1}^{(j-1)}(s-c_i)p_{i-1}^{(j)}(s-c_i).
\end{equation*}
For a fixed $j\in\{0, 1, ..., i+1\}$, it is easy to check that the coefficients in front of each of these terms in (\ref{f2}) and (\ref{f1}) are equal. Therefore,
\begin{align*}
f_i(s) &= p_i(s)^2 + \frac12p_i'(s)^2 + \cdots + \frac1{2^ii!}p_i^{(i)}(s)^2\\
&= \sum_{j=0}^i \frac1{2^j j!} p_i^{(j)}(s)^2
\end{align*}
Note that since $\deg(p_i) = i$, then, $p_i^{(i)}(s)$ equals the leading coefficient of $p_i(s)$, which, as we discussed above, equals $2^i$. Therefore, the term $\frac1{2^i i!} p_i^{(i)}(s)^2 = 2^i i!$. Thus, one of the squares in $f_i(s)$ is a positive number, so $f_i(s) > 0$ for all $s$.
\end{proof}

\section{Numerical experiments}\label{SecSym}
In this section we present the results of several numerical experiments to complement our theoretical results.   
To allow for potentially noisy observations, we solve the constrained least squares problem
\begin{equation}
\label{EqnPracticalProblem}
\begin{aligned}
\underset{\mu\ge 0}{\minimize} &\quad \sum_{i=1}^n \left ( \int \psf(s_i,t) d\mu(t) - x(s_i) \right )^2\\
\text{subject to} & \quad \int w(t)\mu(dt) \le \tau
\end{aligned}
\end{equation}
using the conditional gradient method proposed in~\cite{NickGeoffBen}.

\subsection{Reweighing matters for source localization}
Our first numerical experiment provides evidence that weighting by $w(t)$ helps recover point sources near the border of the image.  
This matches our intuition: near the border, the mass of an observed point-source is smaller than if it were measured in the center of the image.  
Hence, if we didn't weight the candidate locations, sources that are very close to the edge of the image would be beneficial to add to the representation.

We simulate two populations of images, one with point sources located away from the image boundary, and one with point sources located near the image boundary.  
For each population of images, we solve \eqref{EqnPracticalProblem} with $w(t)=\int \psf(s,t) dP(s)$ (weighted) and with $w(t) = 1$ (unweighted).
We find that the solutions to~\eqref{EqnPracticalProblem} recover the true point sources more accurately with $w(t) = \int \psf(s,t) dP(s)$.

We use the same procedure for computing accuracy as in~\cite{SMIPaper}.  
Namely we match true point sources to estimated point courses and compute the {\em F-score} of the match. 
To describe this procedure in detail, we compute the F-score by solving a bipartite graph matching problem.  In particular, we form the bipartite graph with an edge between $t_i$ and $\hat t_j$ for all $i,j$ such that $\Vert t_i - \hat t_j \Vert < r$, where $r>0$ is a tolerance parameter, and $\hat t_1, \ldots, \hat t_N$  are the estimated point sources. Then we greedily select edges from this graph under the constraint that no two selected edges can share the same vertex; that is, no $t_i$ can be paired with two $\hat t_{j}, \hat t_k$ or vice versa.
Finally, the $\hat t_i$ successfully paired with some $t_j$ are categorized as true positives, and we denote their number by $T_P$.  
The number of false negatives is $F_N = M - T_P$, and the number of false positives is $N - T_P$.  The precision and recall are then $P = \frac{T_P}{T_P + F_N},$ and $R = \frac{T_P}{T_P + F_P}$ respectively, and the F-score is the harmonic mean:
$$F = \frac{2P R}{P + R}.$$
We find a match by greedily pairing points of $\{\tau_1,\ldots,\tau_N\}$ to elements of $\{t_1,\ldots,t_M\}$, and a tolerance radius $r>0$ upper bounds the allow distance between any potential pairs.  To emphasize the dependence on $r$, we sometimes write $F(r)$ for the F-score.

Both populations contain $100$ images simulated using the Gaussian point spread function $$\psf(s,t) = e^{-\frac{(s-t)^2}{\sigma^2}}$$ with $\sigma = 0.1$, and in both cases, the measurement set $\cS$ is a dense uniform grid of $n = 100$ points covering $[0,1]$.
The populations differ in how the point sources for each image are chosen.  Each image in the first population has five points drawn uniformly in the interval $(.1,.9)$, while each image in the second population has a total of four point sources with two point sources in each of the two boundary regions $(0,.1)$ and $(.9,1)$.  
In both cases we assign intensity of $1$ to all point sources, and solve \eqref{EqnPracticalProblem} using an optimal value of $\tau$ (chosen with a preliminary simulation).

The results are displayed in Figure~\ref{FigCentBdry}.  The left subplot shows that the F-scores are essentially the same for the weighted and unweighted problems when the point sources are away from the boundary.   This is not surprising because when $t$ is away from the border of the image, then $\int \psf(s,t)dP(s)$ is essentially a constant, independent of t. 
But when the point sources are near the boundary, the weighting matters and the F-scores are dramatically better.

\begin{figure}[ht]
\begin{subfigure}{.45\textwidth}
\includegraphics[height=7.7cm]{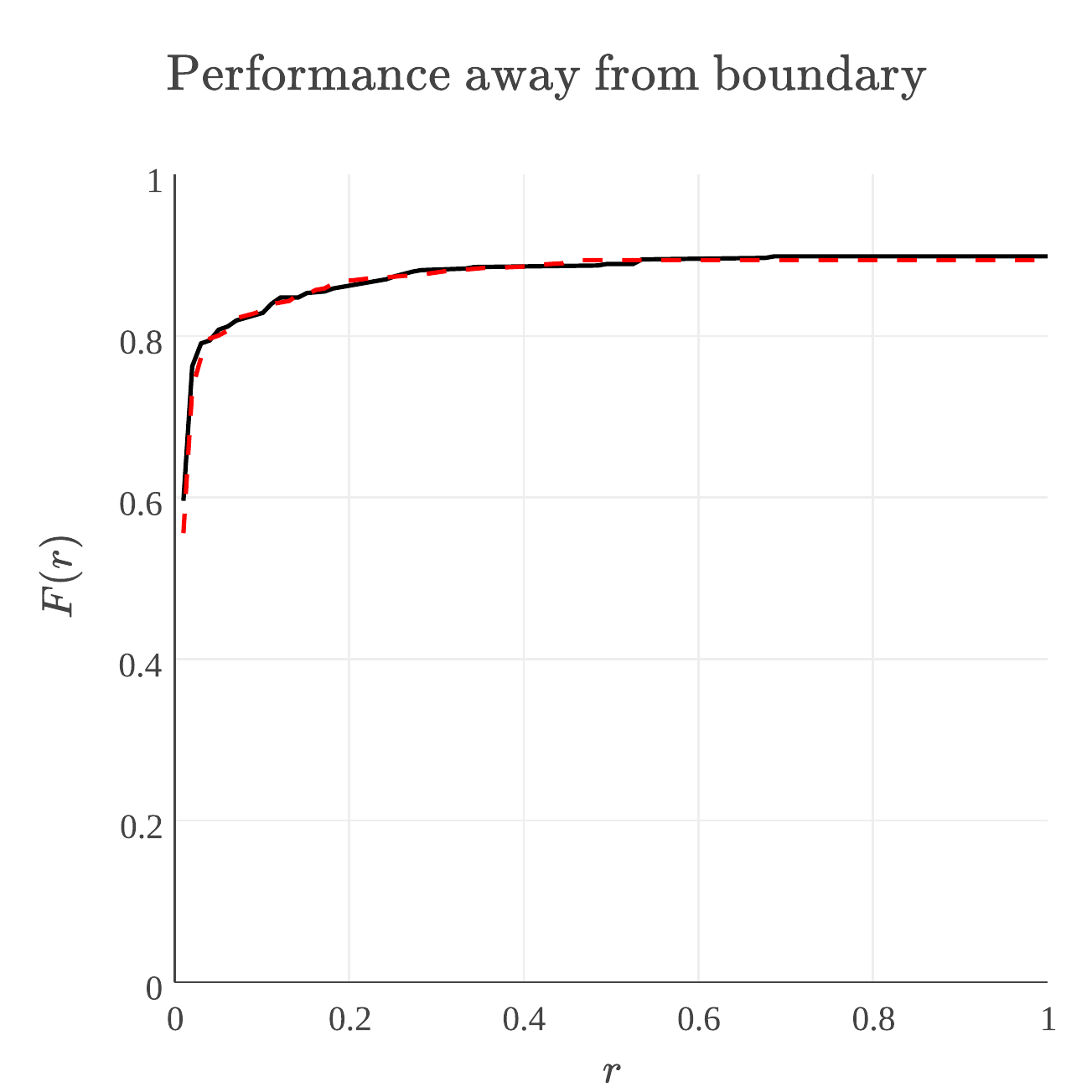}
\end{subfigure}
\begin{subfigure}{.59\textwidth}
\includegraphics[height=7.7cm]{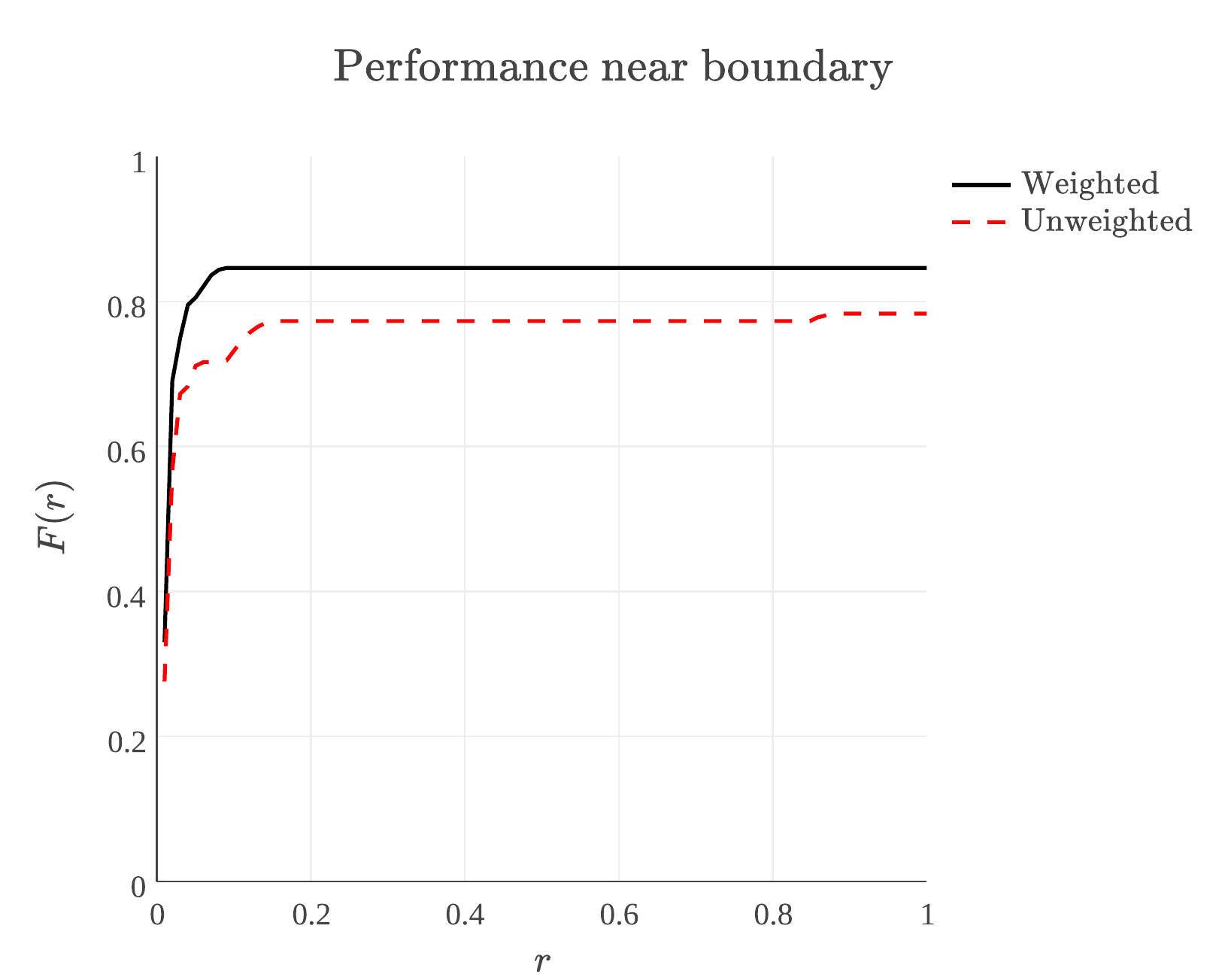}
\end{subfigure}
\caption{ {\bf Reweighing matters for source localization.} The two plots above compare the quality of solutions to the weighted problem (with $w(t) = \int \psf(s,t)dP(s)$) and the unweighted problem (with $w(t) = 1$).  
When point sources are away from the boundary (left plot), the performance is nearly identical.  But when the point sources are near the boundary (right plot), the weighted method performs significantly better.}
\label{FigCentBdry}
\end{figure}

\subsection{Sensitivity to point-source separation}
Our theoretical results assert that in the absence of noise the optimal solution of \eqref{Primal} recovers point sources with no minimum bound on the separation.
In the following experiment, we explore the ability of~\eqref{EqnPracticalProblem} to recover pairs of points as a function of their separation. 
The setup is similar to the first numerical experiment.
We use the Gaussian point spread function with $\sigma = 0.1$ as before, but here we observe only $n=50$ samples.  For each separation $d\in\{.1\sigma, .2\sigma,\ldots,1.9\sigma, 2\sigma\}$, we simulate a population of $20$ images containing two point sources separated by $d$.  The point sources are chosen by picking a random point $x$ away from the border of the image and placing two point sources at $x\pm \frac d 2$. Again, each point source is assigned an intensity of $1$, and we attempt to recover the locations of the point sources by solving~\eqref{EqnPracticalProblem}.  

\begin{figure}
\begin{subfigure}[b]{.49\textwidth}
\centering
\includegraphics[width = \textwidth]{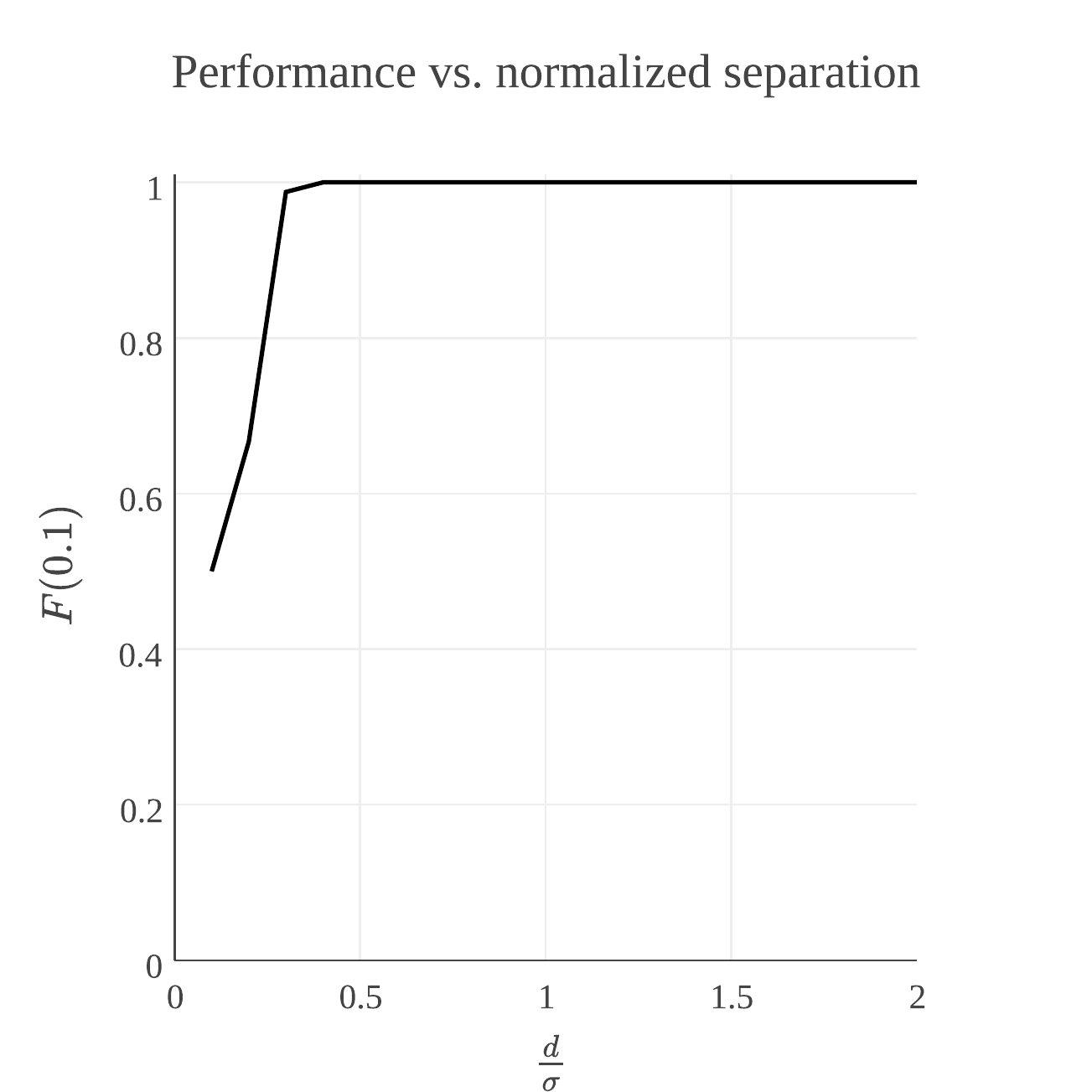}
\caption{}
\end{subfigure}
\begin{subfigure}[b]{.49\textwidth}
\centering
\includegraphics[width = \textwidth]{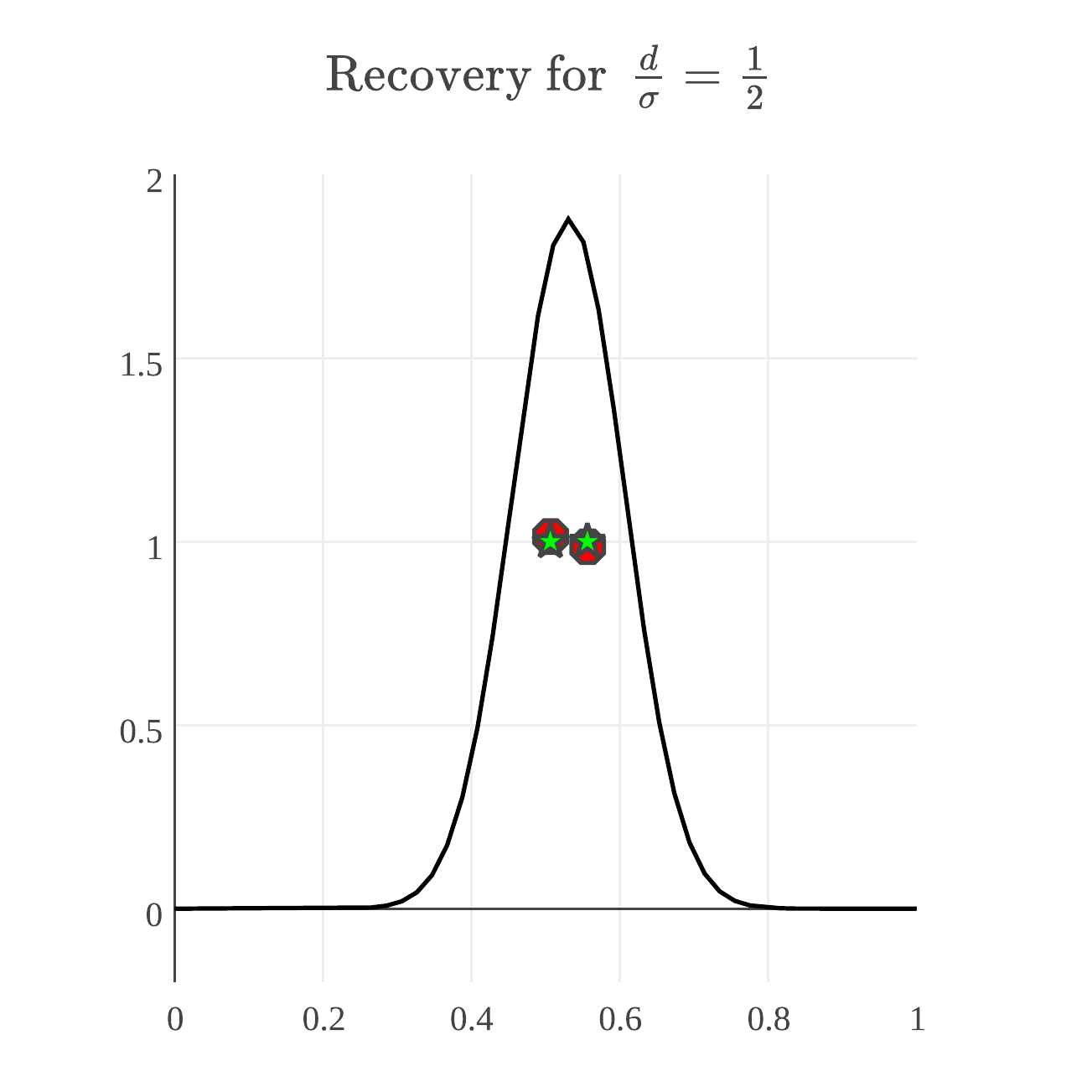}
\caption{}
\end{subfigure}
\caption{{\bf Sensitivity to point-source separation.} (a) The F-score at tolerance radius $r=0.1$ as a function of normalized separation $\frac d \sigma$.
(b) The black trace shows an image for $\frac d \sigma = \frac 1 2$.  The green stars show the locations (x-coordinate) and weights (y-coordinate) of the true point sources.  The red dots show the recovered locations and weights.}
\label{FigS}
\end{figure}

In the left subplot of Figure~\ref{FigS} we plot F-score versus separation for the value of $\tau$ that produces the best F-scores.  Note that we achieve near perfect recovery for separations greater than $\frac \sigma 4$.  The right subplot of Figure~\ref{FigS} shows the observations, true point sources, and estimated point sources for a separation of $\frac d \sigma = \frac 1 2$.  Note the near perfect recovery in spite of the small separation.

Due to numerical issues, we cannot localize point sources with arbitrarily small $d>0$.  Indeed, the F-score for $\frac d \sigma < \frac 1 4$ is quite poor.
This does not contradict our theory because  numerical ill-conditioning is in effect adding noise to the recovery problem, and we expect that a  separation condition will be necessary in the presence of noise.

\subsection{Sensitivity to noise}

Next, we investigate the performance of~\eqref{EqnPracticalProblem} in the presence of additive noise. 
The setup is identical to the previous numerical experiment, except that we add Gaussian noise to the observations.  
In particular, our noisy observations are 
$$\{ x(s_i) + \eta_i \; | \; s_i \in \cS \}$$
where $\eta_i\sim\cN(0,0.1)$. 

We measure the performance of~\eqref{EqnPracticalProblem} in Figure~\ref{FigNoise}.
Note that we achieve near-perfect recovery when $d> \sigma$.  
However, if $d< \sigma$ the F-scores are clearly worse than the noiseless case. Unsurprisingly, we observe that sources must be separated in order to recover their locations to reasonable precision.  
We defer an investigation of the dependence of the signal separation as a function of the signal-to-noise ratio to future work.

\begin{figure}
\begin{subfigure}[b]{.49\textwidth}
\centering
\includegraphics[width = \textwidth]{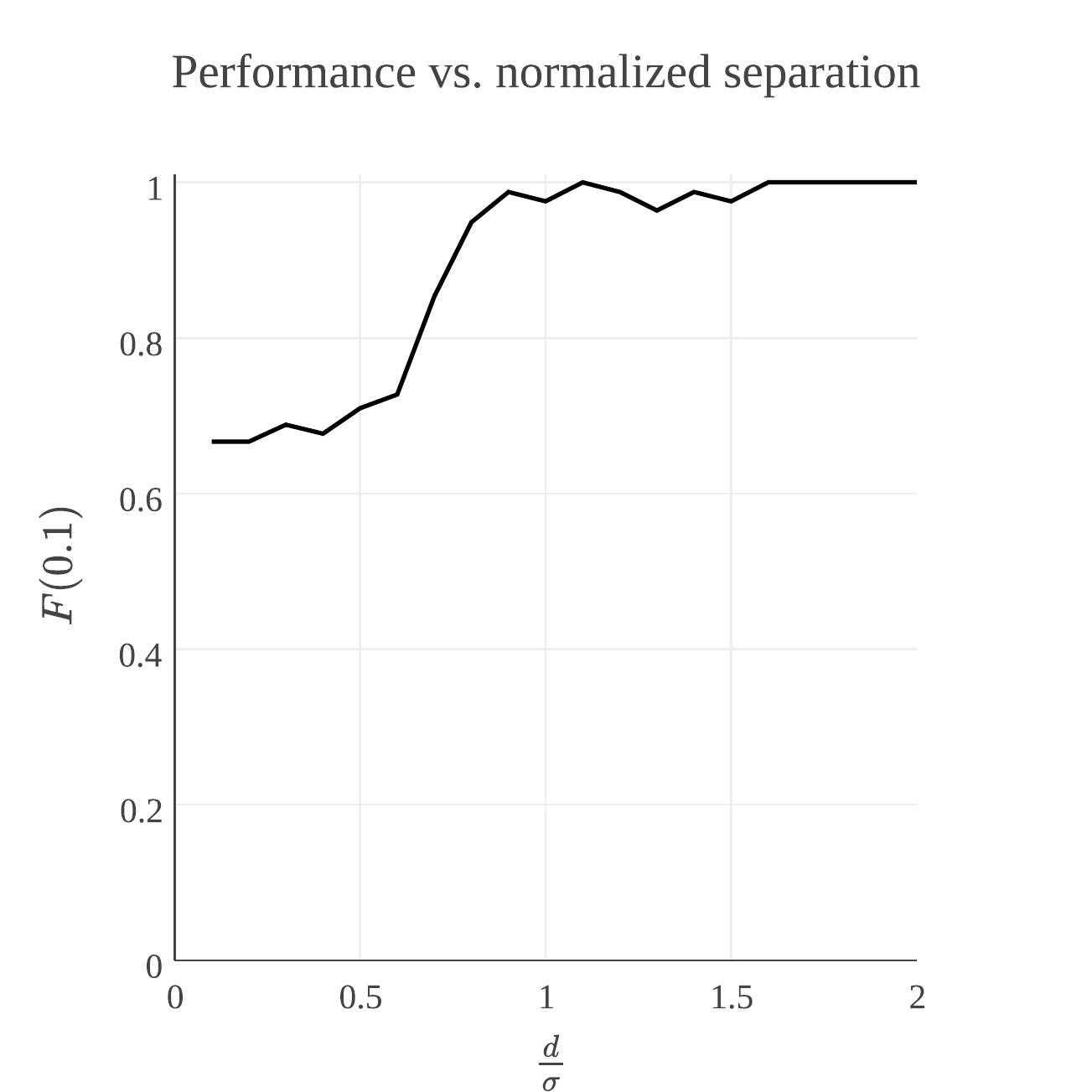}
\caption{}
\end{subfigure}
\begin{subfigure}[b]{.49\textwidth}
\centering
\includegraphics[width = \textwidth]{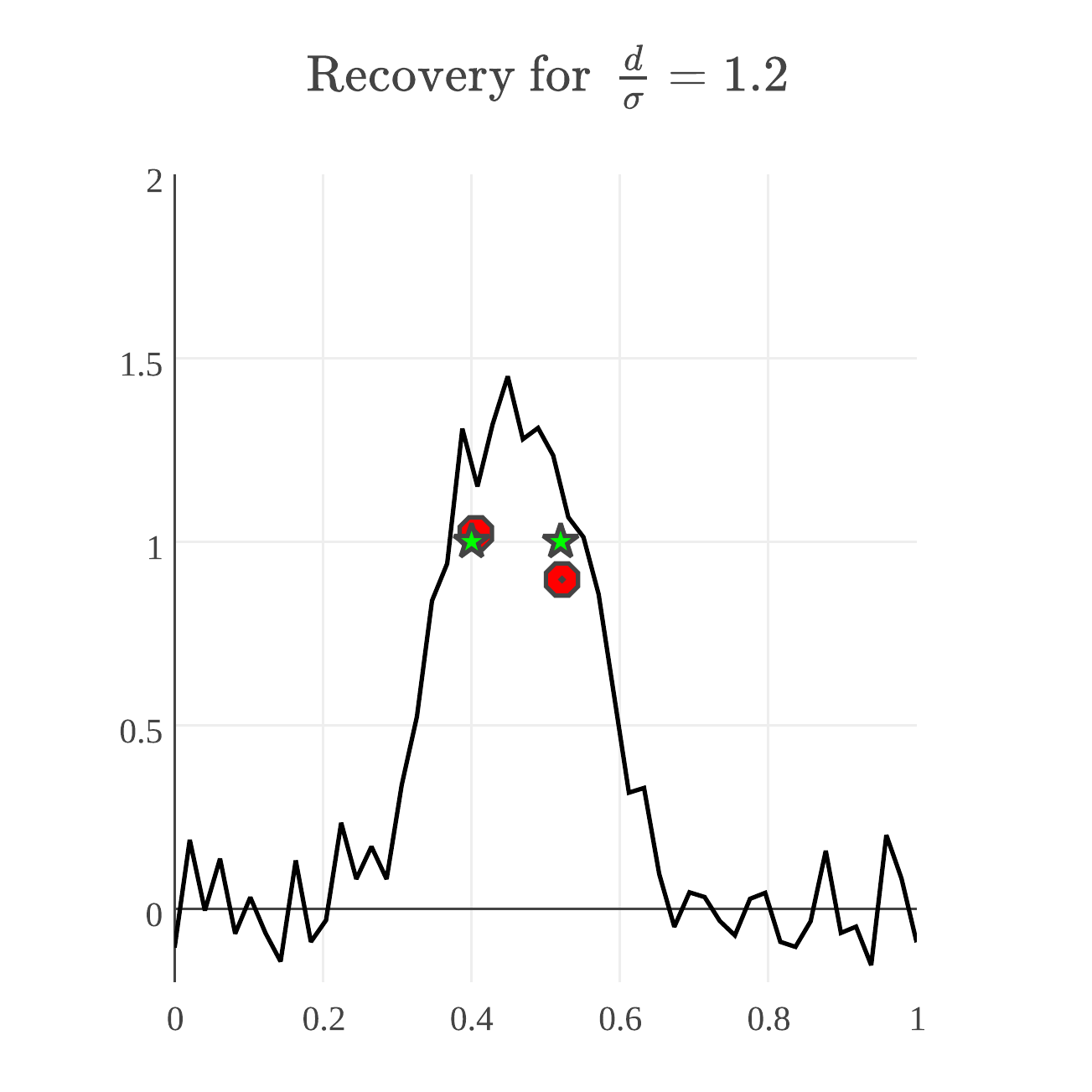}
\caption{}
\end{subfigure}
\caption{{\bf Sensitivity to noise.}  (a) The F-score at tolerance radius $r=0.1$ as a function of normalized separation $\frac d \sigma$.
(b) The black trace is the $50$ pixel image we observe.  The green stars show the locations (x-coordinate) and weights (y-coordinate) of the true point sources.   The red dots show the recovered locations and weights.}
\label{FigNoise}
\end{figure}

\subsection{Extension to two-dimensions}

Though our proof does not extend as is, we do expect generalizations of our recovery result to higher dimensional settings.  The optimization problem~\eqref{EqnPracticalProblem} extends immediately to arbitrary dimensions, and we have observed that it performs quite well in practice.  We demonstrate in Figure~\ref{FigHD} the power of applying~\eqref{EqnPracticalProblem} to a high density fluorescence image in simulation.  
Figure~\ref{FigHD} shows an image simulated with parameters specified by the Single Molecule Localization Microscopy challenge~\cite{SMIChallenge}.  In this challenge, point sources are blurred by a Gaussian point-spread function and then corrupted by noise.
The green stars show the true locations of a simulated collection of point sources, and the red dots show the support of the measure output by~\eqref{EqnPracticalProblem} applied to the greyscale image forming the background of Figure~\ref{FigHD}.  The overlap between the true locations and estimated locations is near perfect with an F-score of $0.98$ for a tolerance radius corresponding to one third of a pixel.

\begin{figure}
\centering
\includegraphics[width= \textwidth]{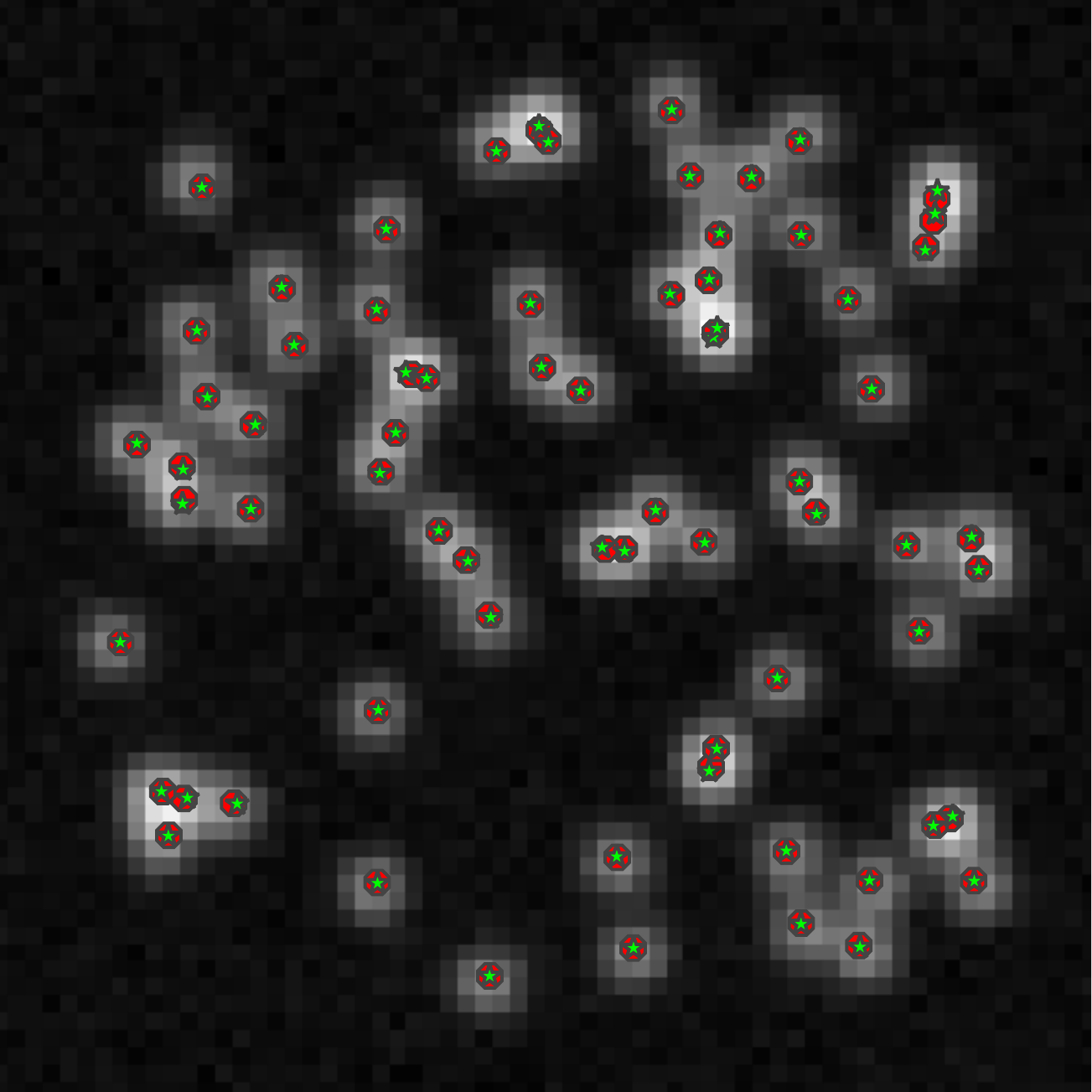}
\caption{{\bf High density single molecule imaging.}  The green stars show the locations of a simulated collection point sources, and the greyscale background shows the noisy, pixelated point spread image. The red dots show the support of the measure-valued solution of~\eqref{EqnPracticalProblem}. } 
\label{FigHD}
\end{figure}

\section{Conclusions and Future Work}

In this paper we have demonstrated that one can recover the centers of a nonnegative sum of Gaussians from a few samples by solving a convex optimization problem.  This recovery is theoretically possible no matter how close the true centers are to one-another.  We remark that similar results are true for recovering measures from their moments.  
Indeed, the atoms of a positive atomic measure can be recovered no matter how close together the atoms are, provided one observes twice the number of moments as there are atoms.
Our work can be seen as a generalization of this result, applying generalized polynomials and the theory of Tchebycheff systems in place of properties of Vandermonde systems.

As we discussed in our numerical experiments, this work opens up several theoretical  problems that would benefit from future investigation.  We close with a very brief discussion of some of the possible extensions.

\paragraph{Noise}  Motivated by the fact that there is no separation condition in the absence of noise, it would be interesting to study how the required separation decays to zero as the noise level decreases.  One of the key-advantages of using convex optimization for signal processing is that dual certificates generically give stability results, in the same way that Lagrange multipliers measure sensitivity in linear programming.   
Previous work on estimating line-spectra has shown that dual polynomials constructed for noiseless recovery extend to certify properties of estimation and localization in the presence of noise~\cite{cg_noisy,granda2, Tang14}.  We believe that these methods should be directly applicable to our problem set-up.

\paragraph{Higher dimensions} One logical extension is proving that the same results hold in higher dimensions. Most scientific and engineering applications of interest have point sources arising one to four dimensions, and we expect that some version of our results should hold in higher dimensions.  Indeed, we believe a guarantee for recovery with no separation condition can be proven in higher dimensions with noiseless observations.  However, it is not straightforward to extend our results to higher dimensions because the theory of Tchebycheff systems is only developed in one dimension.  In particular, our approach using limits of polynomials does not directly generalize to higher dimensions.

\paragraph{Other point spread functions} We have shown that our Conditions~\ref{main-conditions}~hold for the Gaussian point spread function, which is commonly used in microscopy as an approximation to an Airy function.  It will be very useful to show that they also hold for other point spread functions such as the Airy function and other common physical models.  Our proof relied heavily on algebraic properties of the Gaussian, but there is a long, rich history of determinantal systems that may apply to generalize our result.  In particular, works on properties of totally positive systems may be fruitful for such generalizations~\cite{ando1987totally,pinkus2010totally}.

\paragraph{Model mismatch in the point spread function}  Our analysis relies on perfect knowledge of the point spread function.  In practice one never has an exact analytic expression for the point spread function.  Aberrations in manufacturing and scattering media can lead to distortions in the image not properly captured by a forward model.  It would be interesting to derive guarantees on recovery that assume only partial knowledge of the point spread function.  Note that the optimization problem of searching both for the locations of the sources and for the associated wave-function is a blind deconvolution problem, and techniques from this well-studied problem could likely be extended to the super-resolution setting.  If successful, such methods could have immediate practical impact when applied to denoising images in molecular, cellular, and astronomical imaging.
 
 \section*{Acknowledgements}
We would like to thank Pablo Parrilo for introducing us to the theory of T-systems.  We would also like to thank Nicholas Boyd, Mahdi Soltanolkotabi, Bernd Sturmfels, and Gongguo Tang for many useful conversations about this work.

BR is generously supported by ONR awards N00014-11-1-0723 and N00014-13-1-0129, NSF awards CCF-1148243 and CCF-1217058, AFOSR award FA9550-13-1-0138, and a Sloan Research Fellowship.  GS was generously supported by NSF award CCF-1148243.  This research is supported in part by NSF CISE Expeditions Award CCF-1139158, LBNL Award 7076018, and DARPA XData Award FA8750-12-2-0331, and gifts from Amazon Web Services, Google, SAP, The Thomas and Stacey Siebel Foundation, Adatao, Adobe, Apple, Inc., Blue Goji, Bosch, C3Energy, Cisco, Cray, Cloudera, EMC2, Ericsson, Facebook, Guavus, HP, Huawei, Informatica, Intel, Microsoft, NetApp, Pivotal, Samsung, Schlumberger, Splunk, Virdata and VMware.
 
 \bibliography{superRes}{}
\bibliographystyle{plain}

\end{document}